\definecolor {uclablue}   {RGB} {39,116,174}
\definecolor {masongreen} {RGB} {0,102,51}
\newcommand{\red}[1]{{\color{red} #1}}
\newcommand{\Gammaint}[0]{\Gamma^{\rm int}_h}        
\newcommand{\triang}[0]{\mathcal{T}_h}           
\newcommand{\ehat}[0]{\hat{e}}              
\newcommand{\nhat}[0]{\hat{n}}              
\newcommand{\tauhat}[0]{\hat{\tau}}              
\newcommand{\jump}[1]{\left\llbracket #1 \right\rrbracket}  
\newcommand{\avg}[1]{\left\{\mkern-5mu\left\{ #1 \right\}\mkern-5mu\right\}} 
\newcommand{\RR}[0]{\mathbb{R}}
\newcommand{\PP}[0]{\mathbb{P}}
\newcommand{\NN}[0]{\mathbb{N}}
\newcommand{\cC}[0]{\mathcal{C}}
\newcommand{\cE}[0]{\mathcal{E}}
\newcommand{\cT}[0]{\mathcal{T}}
\newcommand{\cV}[0]{\mathcal{V}}
\def\O{\Omega}
\def\cV{\mathcal{V}}
\def\cT{\mathcal{T}}
\def\cE{\mathcal{E}}
\def\sjump#1{[\hskip -1.5pt[#1]\hskip -1.5pt]}
\newtheorem{theorem}{Theorem}[section]
\newtheorem{lemma}[theorem]{Lemma}
\theoremstyle{definition}
\theoremstyle{remark}
\newtheorem{remark}[theorem]{Remark}
\numberwithin{equation}{section}
\begin{document}

\title[Error estimates for thin sheet folding]{A Posteriori and a priori error estimates for 
linearized thin sheet folding}


\author[H. Antil]{Harbir Antil}
\address{Department of Mathematics and Center for Mathematics and Artificial Intelligence, 
George Mason University, Fairfax, VA, 22030, USA}
\email{hantil@gmu.edu}
\thanks{This work is partially supported by the Office of Naval Research (ONR) under Award NO: N00014-24-1-2147,
NSF grant DMS-2408877, the Air Force Office of Scientific Research (AFOSR) under Award NO: FA9550-22-1-0248.}

\author[S. P. Carney]{Sean P.\ Carney}
\address{Department of Mathematics, Union College, Schenectady, NY, 12308, USA}
\email{carneys@union.edu}
\thanks{}

\author[R. Khandelwal]{Rohit Khandelwal}
\address{Department of Mathematics and Center for Mathematics and Artificial Intelligence, 
George Mason University, Fairfax, VA, 22030, USA}
\curraddr{}
\email{rkhandel@gmu.edu}
\thanks{}

\subjclass[2020]{65N15, 65N30, 65N50, 74K20}

\keywords{Adaptive finite element methods, discontinuous Galerkin, 
\emph{medius} analysis, fourth-order interface problems, thin sheet folding}

\begin{abstract}
We describe \emph{a posteriori} error analysis for a discontinuous Galerkin
method for a fourth order elliptic interface problem 
that arises from a linearized model of thin sheet folding. 
The primary contribution is a local
efficiency bound for an estimator that measures
the extent to which the interface conditions along the fold are satisfied,
which is accomplished by constructing a novel edge bubble function. 
We subsequently conduct
a \emph{medius} analysis to obtain improved a \emph{priori} error estimates 
under the minimal regularity assumption on the exact solution. 
The performance of the method is illustrated by
numerical experiments.

\end{abstract}

\maketitle


\section{Introduction}
\label{sec:introduction}
Thin foldable structures can be found in a variety of engineering systems and
natural phenomena. These structures exhibit a balance between 
flexibility, stability and retractability, for example in 
the hind wings of ladybird beatles (i.e.\ ``ladybugs'') 
and some roach species \cite{saito2017investigation}. Examples from 
engineering science include the James Webb Space Telescope \cite{james_webb}
and Starshade technology for the detection of exoplanets \cite{starshade} from NASA; 
these structures were designed to fold up into a configuration compact 
enough to be launched from earth without sacrificing 
their scientific utility when deployed in space. 
Additional examples come from architecture \cite{schleicher2015methodology}, 
sheet metal pressing and wrapping \cite{paulsen2019wrapping}, and both 
origami and kirigami \cite{choi2021compact,demaine2007geometric,liu2021origami}. 

Mathematical models of thin foldable structures 
fundamentally
originate from the physics of three-dimensional 
hyperelastic materials. If $\delta$ denotes the width of a thin sheet
occupying a simply connected region $\Omega_{\delta} \subset \RR^3$ 
that includes some ``creased'', or folded region 
$\cC_{\epsilon}\subset \Omega_{\delta}$ where it is weakened (resulting in a 
so-called ``prepared material''), 
then the hyperelastic energy associated to some 
 deformation vector $y: \Omega_{\delta} \to \RR^3$ is
\begin{equation}\label{eq:pre_Gamma_E}
E_{\delta,\epsilon}(y) = \int_{\Omega_{\delta}} W_{\epsilon}(x,\nabla y(x)) \, dx - \int_{\Omega_{\delta}} f(x)\cdot y(x) \,dx.
\end{equation}
Here $W_{\epsilon}$ is some physically valid, material dependent free
energy density, and $f$ is some prescribed forcing term. 
Building off of the work in \cite{friesecke2002theorem}, the authors in \cite{bartels2022modeling}
show that in the 
limit as the sheet width $\delta$ and the measure of 
the crease region $|\cC_{\epsilon}|$ both vanish, 
$E_{\epsilon,\delta}$ $\Gamma$-converges \cite[Definition 12.1.1]{attouch2006variational}
to 
\begin{equation}\label{eq:full_E}
E(y) = \frac12 \int_{\Omega \setminus \cC} | D^2 y(x) |^2 \, dx - \int_{\Omega} f(x) \cdot y(x) \, dx , 
\end{equation}
where the crease $\cC$ here is a one-dimensional curve contained in $\Omega$, a two dimensional, 
simply connected open subset of $\RR^3$, $D^2$ denotes the Hessian, and $|\cdot |$ denotes
the Euclidean norm for rank-3 tensors. 
The space of admissible functions over which to minimize \eqref{eq:full_E} is 
$$
\big\{ y \in [H^2(\Omega\setminus \cC) \cap W^{1,\infty}(\Omega)]^3 \, \big\vert  \, 
y = \mu, \,\nabla y = \Psi \text{ on } \partial_D \Omega, \, (\nabla y)^\top \nabla y = I 
\text{ a.e.\ in } \Omega \big\},
$$
which enforces the isometry constraint $(\nabla y)^\top \nabla y =I$ and encodes 
clamped boundary conditions $y = \mu$ and $\nabla y = \Psi$ on a subset $\partial_D \Omega$ 
of the domain boundary $\partial \Omega$.
For compatibility, it is required that $\nabla \mu = \Psi$ on $\partial_D \Omega$. 
Notice that admissible functions need only be $H^2$ regular on $\Omega\setminus \cC$, which 
allows for jumps in the deformation gradient $\nabla y$ along the fold $\cC$.

In the past decade, several authors have considered the problem of numerically computing 
minimizers to \eqref{eq:pre_Gamma_E} and \eqref{eq:full_E} 
\textit{in the absence of} a fold.
For example, the works 
\cite{bartels2013approximation} and \cite{bartels2017bilayer} 
compute numerical approximations
using discrete
Kirchhoff triangles and quadrilaterals, respectively. Approximation schemes based
on the interior penalty discontinuous Galerkin (DG) method \cite{bonito2021dg} 
and the local discontinuous Galerkin method 
(LDG) \cite{bonito2023numerical,bonito2024gamma} have also been proposed and analyzed. 

More recently, numerical methods for the problem \textit{with} 
a fold have been devised, for example, in
  \cite{bartels2022modeling,bonito2023numerical} and \cite{bonito2024finite}. 
The former works are based on 
an LDG method, while the latter method uses continuous 
finite element basis functions and also 
considers an additional term in the energy functional which accounts for possible material stretching. 

Directly related to the content in the present article is
the work in \cite{MR4699572}, where
the authors analyzed an interior penalty DG 
method for a linearized version of the energy \eqref{eq:full_E} 
valid in the regime of small deformations. 
In this setting,
the vector $y$ is assumed to be a perturbation of the identity map
in a single direction, typically taken to be the vertical direction. 
Under this assumption, the isometry constraint of the full nonlinear model
can be omitted, and it suffices to consider the vertical component 
$u := y_3 : \Omega \to \RR$ of the deformation \cite[Chapter 8]{MR3309171}. 
The linearized folding model energy becomes
\begin{equation} \label{eq:linear_E}
E_{\rm linear}(u) = \frac12 \int_{\Omega\setminus \cC} |D^2 u(x) |^2 \, dx - \int_{\Omega} f(x) \cdot e_3 \, u(x) \, dx,
\end{equation}
where $|\cdot|$ now denotes the Euclidean norm on rank-2 tensors, and
the space of admissible minimizers is 
$$
\big\{ u \in H^2(\Omega\setminus \cC) \cap H^{1}(\Omega) \, \big\vert  \, 
u = g \text{ and } \nabla u = \Phi \text{ on } \partial_D \Omega \big\}; 
$$
as before, $\nabla g = \Phi$ is required on $\partial_D \Omega$ for compability. 

Despite this recent progress in the numerical analysis of 
folding models, only a few authors have begun to address the 
issue of how to design 
\emph{adaptive} numerical methods. 
In one example, the authors in 
\cite{caboussat2022anisotropic}
proposed an adaptive mesh refinement routine for the related problem of
 approximating so-called orthogonal maps, which arise in mathematical models of 
origami. While the numerical results are promising, no rigorous analysis 
was presented.

The only other work in this direction appears to be 
a preliminary \emph{a posteriori}
error analysis of an interior penalty DG method 
discretization of the linearized folding model 
resulting from \eqref{eq:linear_E}
that was presented
in the doctoral thesis \cite{tschernernumerical}.
However, the thesis only provides a reliability bound, i.e.\
suitably defined estimators were shown to form an upper bound 
for the discrete error in an energy norm. The reverse inequality--an
efficiency bound--was not shown. 

As a first step towards robust \textit{a posteriori} error estimates of more sophisticated
folding models, 
we provide in the present article 
the missing local efficiency 
estimates for the interior penalty DG discretization of the 
linear folding model considered in 
\cite{MR4699572} and \cite{tschernernumerical}.

With reliability and efficiency estimates in hand, we also 
obtain improved \textit{a priori} error estimates 
via a \textit{medius} analysis \cite{MR2684360}. 
The \emph{a priori} estimates previously obtained in \cite{MR4699572} rely on Galerkin orthogonality, 
which requires the solution to the continuous (i.e.\ infinite dimensional) problem 
to be $H^4(\Omega\setminus\cC) \cap H^1(\Omega)$ regular. Such high regularity may not always be realistic \cite[Appendix A]{MR3022211}, 
as we observe in numerical examples here. Instead, our improved
\textit{a priori} estimates only require $H^2(\Omega\setminus\cC)\cap H^1(\Omega)$ regularity.

The Euler-Lagrange equation for the linearized energy \eqref{eq:linear_E} 
considered here is (in its strong form) a fourth order, biharmonic equation with 
a set of interface conditions along the crease $\cC$. 
Although adaptive, nonconforming finite element methods have been developed and analyzed for
the classical biharmonic problem, for example in
\cite{MR2755946,brenner2010posteriori,
fraunholz2015convergence,dominicus2024convergence}, 
such analysis of fourth order interface problems  
is still open. 
This is analogous to the second order case; 
while the literature on adaptive finite element methods for second
order elliptic problems is vast, there are comparatively few studies  
on adaptive methods for second order elliptic interface problems. 
In the context of interior penalty DG methods, rigorous analysis was 
carried out in \cite{cai2011discontinuous} and \cite{cangiani2018adaptive}, 
while computational aspects were addressed in 
\cite{annavarapu2012robust,saye2019efficient}.

Most of the error estimators here have been previously introduced 
for \textit{a posteriori} 
analyses for the classical biharmonic problem. 
The interface conditions along the fold $\cC$ result
in a new error estimator, first introduced in \cite{tschernernumerical}, 
for which we prove a new local efficiency bound. 
The estimator involves the average of a second derivative of 
the deformation along the fold; 
we bound this estimator by the PDE residual in neighboring elements 
by constructing a novel edge bubble function, which may be useful for
\textit{a posteriori} analysis of other interface problems.

In line with
previous analysis of DG approximations for interface problems
\cite{cangiani2013discontinuous,cangiani2018adaptive,bonito2024finite,MR4699572,tschernernumerical}, 
we consider here a fitted discretization, 
i.e.\ we assume that numerical mesh can exactly represent the crease geometry: 
$\cC = \cC_h$. 
As in \cite{bartels2022modeling,bonito2023numerical}, our analysis further 
assumes the case of a piecewise linear folding curve $\cC$. 
While there are many interesting deformations that can result from piecewise linear 
folds, this is, of course, a relatively strong assumption. For technical reasons, however, it is 
difficult to relax this assumption for \textit{a posteriori} analysis, as
it is not clear how to construct recovery, or ``enriching'' operators that map from the DG 
space to a conforming space with higher regularity, 
in the presence of interface. This difficulty was previously 
pointed out in \cite[Section 11]{cangiani2018adaptive} in the context of a second order interface problem, 
which is still an open question.
The issue is more acute for the fourth order problem considered here, as
enriching operators for second order problems map to $C^0$ functions, while 
for fourth order problems they map to $C^1$ functions \cite[Section 1]{MR543934}. For 
our analysis in particular, the target $C^1$ space must include normal derivatives as 
degrees of freedom (DoF), yet there is currently no well-defined framework for isoparametric finite elements that preserve such derivative-based DoFs \cite{MR1300112}.

The fitted discretization assumption leads to a geometric consistency
error (a kind of ``variational crime'' \cite{strang1972variational}) 
that was quantified in \cite[Theorem 4.3]{MR4699572}; 
if the interface is approximated
as a piecewise polynomial map of degree $m$, then the geometric error will decay like 
$h^{m-1}$. As remarked in \cite{MR4699572}, 
 this error scaling is consistent with the implications of Babu\v{s}ka's paradox; 
see also \cite{bartels2024necessary,bartels2025babu} for more details. 
Consistent with the results in \cite{MR4699572,tschernernumerical}, however,
we do not detect this error in any of our numerical experiments here. 

The rest of the manuscript is organized as follows. 
Sections \ref{sec:model_problem} and \ref{sec:dg_method} 
detail the model PDE problem and DG discretization, respectively. 
We introduce the novel bubble function in Section \ref{sec:bubble} and 
prove the key properties therein. 
Section \ref{sec:reliability} recapitulates the reliability estimates from 
\cite{tschernernumerical}, and in Section \ref{sec:efficiency}, we employ the 
new bubble function to prove local efficiency estimates. 
Section \ref{sec:medius} contains the improved \textit{a priori} 
estimates obtained with a \textit{medius} analysis. Finally, we present several 
numerical examples in Section \ref{sec:numerical_examples} which 
substantiate the performance of the \emph{a posteriori} error estimators
over adaptive mesh refinement, and we offer concluding remarks in
Section \ref{sec:conclusions}.


\section{Model problem}
\label{sec:model_problem}
We now summarize the linear folding model problem introduced in 
\cite{MR4699572} that we consider throughout the rest of the manuscript. 
Let $\Omega \subset \RR^2$ be an open, bounded, polygonal Lipschitz domain 
that is partitioned into two distinct subdomains $\Omega_1$ and $\Omega_2$
by a Lipschitz curve $\cC$, so that $\Omega = \Omega_1\cup \cC \cup \Omega_2$, 
as depicted in Figure \ref{fig:cartoon_domain}. For $\omega \subseteq \Omega$, we recall the Hilbert space 
$H^r(\omega)$ $(r \in \mathbb{N})$, which is the set of all $L^2(\omega)$ functions 
whose distributional derivatives up to order $r$ are in $L^2(\omega).$ Denote 
by $|\cdot|_{r,\omega}$, the semi-norm on the space $H^r(\omega)$ and by $H^r_0(\omega)$, 
the set of all functions in $H^r(\omega)$ whose traces vanish up to order $r-1$. 
We refer \cite{MR2373954,MR0520174} for more details.

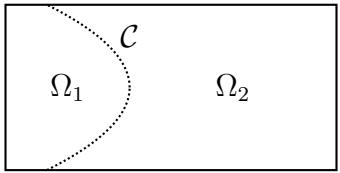
\begin{figure}\label{fig:example_domain}
    \begin{tikzpicture}[xscale=0.55, yscale=0.55]
    
    \draw[thick] (0,0) rectangle (8,4);
    


    \draw[thick, smooth, densely dotted, domain=0:2, variable=\y] plot ({-0.5*\y^2+3}, {\y+2}); 
    \draw[thick, smooth, densely dotted, domain=-2:0, variable=\y] plot ({0.5*\y^2+3}, {\y+2}); 

    \node at (1.5,2) {\Large{$\Omega_1$}};
    \node at (5.5,2) {\Large{$\Omega_2$}};
    \node at (3.,3.25) {\Large{$\cC$}};
    
    \end{tikzpicture}
    \caption{Example domain $\Omega = \Omega_1 \cup \cC \cup \Omega_2$.}
    \label{fig:cartoon_domain}
 \end{figure}

Given $f \in L^2(\Omega)$\footnote{Note the abuse of notation, 
as here $f$ is scalar-valued, while in \eqref{eq:pre_Gamma_E}, \eqref{eq:full_E} and \eqref{eq:linear_E}
$f$ is vector-valued.}, consider the problem of minimizing 
\begin{equation}\label{eq:model_energy}
\cE(u) := \frac12 \int_{\Omega \setminus \cC} |D^2 u(x)|^2 \, dx  - \int_{\Omega} f(x)u(x)\, dx
\end{equation}
over the set of admissible functions
\begin{equation}\label{eq:admissible_fns}
\cV(g, \Phi) := \Big\{ v \in H^2(\Omega_1 \cup \Omega_2) \cap H^1(\Omega) 
\Big\vert \, v = g, \nabla v = \Phi \text{ on } \partial_D \Omega \Big\}, 
\end{equation}
where $\partial_D \Omega$ is a nonempty subset of $\partial \Omega$. 
We assume $g \in H^{3/2}(\partial_D \Omega)$  and
$\Phi \in [H^{1/2}(\partial_D \Omega)]^2$ are the traces
of functions $\tilde g\in H^2(\Omega)$ and $\tilde \Phi \in [H^1(\Omega)]^2$, and that $\nabla g = \Phi$
on $\partial_D\Omega$. Notice that 
members of $\cV(g,\Phi)$ are allowed to fold along $\cC$, 
as they are globally in $H^1$ but only $H^2$ on each subdomain. 

The Euler-Lagrange equation for a minimizer to \eqref{eq:model_energy} is 
given by 
\begin{equation}\label{eq:variational_form}
a(u,v)= l(v) \qquad  \forall v \in \cV(0,0), 
\end{equation}
where
\begin{equation*}\label{eq:variational_form2}
a(u,v):= \int_{\Omega\setminus\cC} D^2 u(x) : D^2 v(x) \, dx \quad \text{and} \quad  l(v) := \int_{\Omega} f(x) v(x) \, dx .  
\end{equation*}
Existence and uniqueness of a solution $u \in \cV(g,\Phi)$ for \eqref{eq:variational_form} follow from a straightforward application of the Lax-Milgram theorem. 
Using the natural boundary conditions 
\begin{equation}\label{eq:natty_bcs}
\partial_n \nabla u = D^2 u n = 0 \quad \text{and} \quad \partial_n \Delta  u = \text{div}( D^2 u n) = 0 
\end{equation}
on $\partial_N \Omega = \partial \Omega \setminus \partial_D \Omega$ (where $n$ is the outward
unit normal vector to $\partial \Omega$), the strong form of \eqref{eq:variational_form} reads
\begin{equation}\label{eq:strong_form}
\Delta^2 u = f  \qquad \text{in } \Omega. 
\end{equation}
Additionally, we have the interface conditions 
\begin{equation}\label{eq:interface_conditions}
\jump{u}=0, \quad\quad
\partial_n \nabla u\big \vert_{\Omega_i}=0, \quad\quad
\jump{\partial_n \Delta u} = 0 
\quad  \text{ on } \cC,
\end{equation}
where $\jump{u}:= u\vert_{\Omega_2} - u\vert_{\Omega_1}$ and $n$ denotes the unit normal to $\cC$ pointing from $\Omega_1$ to $\Omega_2$.
The first and third condition in \eqref{eq:interface_conditions} 
are expected for smooth solutions to the 
fourth order problem \eqref{eq:strong_form}. 
However, since elements of $\cV(g,\Phi)$ are only globally $H^1$ regular,
$\nabla u$ need not be continuous along the fold; hence, the second
condition only implies that 
the normal component of the curvature of the deformation $u$ should 
vanish at the fold.

\section{Discontinuous Galerkin method}
\label{sec:dg_method}
\subsection{Preliminaries}
We first introduce some notation used throughout the text before
defining the DG method used to approximate the solution to the 
variational problem \eqref{eq:variational_form}. We then recall some preliminary 
results used in the subsequent analysis.  

For a  given mesh parameter $h>0$,  
let $\cT_h$ be the partition of $\O$ into regular triangles \cite[Definition 4.4.13]{MR2373954} such that 
\begin{align*}
\bar{\O}=\underset{T \in \cT_h}{ \bigcup}T.
\end{align*}
 Further,  the notation $X \lesssim Y$ is used to represent 
$X \leq CY$ where $C$ is a positive generic constant independent of the mesh parameter $h$. 

The DG finite element space used to approximate the continuous 
space \eqref{eq:admissible_fns} is defined as 
\begin{align} \label{eq:dg_fe_space}
\mathcal{V}^k_h:=\big\{v_h \in L^2(\Omega)~\big| \,\, v_h|_T \in \mathbb{P}_k(T) \quad \forall T \in \cT_h \big \},
\end{align} 
where for any $T \in \cT_h$, $\mathbb{P}_k(T)$ refers to the space of 
polynomials of degree at most $k \in \NN \cup \{0\}$. 
Associated to any $T \in \cT_h$,  
let $h_T:= \text{diam } T$. Let $\Gamma_h$ denote 
the set of all edges of $\cT_h$, and for any edge 
$e \in \Gamma_h$, let $h_e$ denote the length of edge $e$.  We define
the set of interior and Dirichlet boundary edges as
\begin{align*}
\Gammaint :=\{ e \in \Gamma_h: e \subset \Omega \} \qquad \text{and} \qquad
\Gamma_h^{D}:=\{ e \in \Gamma_h: e \subset \partial_D\Omega \},
\end{align*}
respectively, and let $\tilde{\Gamma}_h:= \Gammaint \cup \Gamma_h^D$. 
As discussed in the introduction, we assume a fitted discretization, so that 
the mesh can exactly represent the fold geometry: $\cC = \cC_h$. 
\subsection{Discrete problem} 
 Following standard procedures \cite{di2011mathematical}, we now 
introduce the DG method from \cite{MR4699572}. 
%
Consider for $T_{+}, T_{-} \in \cT_h$ the interior 
edge $\Gammaint \owns e = \partial T_{+} \cap \partial T_{-}$, and
define the jump $\jump{\cdot}$ and the average $\avg{\cdot}$ of some function $v$
across $e$ as
$$
\jump{\varphi}:= \varphi\vert_{T_+} - \varphi \vert_{T_{-}},
\qquad
\avg{\varphi}:= \frac12 \left( \varphi\vert_{T_+} + \varphi \vert_{T_{-}}\right) . 
$$
For boundary edge $e \in \Gamma_h^D$ belonging to $T\in\cT_h$, we define 
$$
\jump{\varphi}:= - \varphi\vert_{T}, \qquad \avg{\varphi} := \varphi\vert_{T}. 
$$
For vector valued functions, these definitions are understood to hold 
componentwise. 

The interior penalty DG method to approximate \eqref{eq:variational_form} is then defined as: 
find $u_h \in \cV_h^k$ such that
\begin{equation}\label{eq:discrete_problem}
a_h(u_h, v_h) = l_h(v_h) \quad \forall~ v_h \in \cV_h^k,
\end{equation}
where the symmetric bilinear form $a_h : \cV_h^k \times \cV_h^k \longrightarrow \mathbb{R}$ and the 
linear form $l_h : \cV_h^k \longrightarrow \mathbb{R}$ are given by \cite{MR4699572}
\begin{align}
a_h(u_h,v_h) &:= \sum_{T \in \cT_h} \int_T D^2 u_h: D^2 v_h \, dx \nonumber \\
& \quad + \sum_{ e \in \tilde{\Gamma}_h \setminus \cC} \int_e  \avg{\partial_n  \nabla u_h} \cdot \jump{\nabla v_h } \, ds
+ \sum_{ e \in \tilde{\Gamma}_h \setminus \cC} \int_e \avg{\partial_n  \nabla v_h} \cdot \jump{\nabla u_h } \, ds \nonumber \\
& \quad - \sum_{e \in \tilde{\Gamma}_h} \int_e \avg{\partial_n \Delta u_h}  \jump{v_h} \, ds
- \sum_{e \in \tilde{\Gamma}_h} \int_e \avg{\partial_n \Delta v_h}   \jump{u_h} \, ds \nonumber \\
& \quad + \sum_{ e \in \tilde{\Gamma}_h \setminus \cC}  \frac{\gamma_1}{h_e} \int_e \jump{\nabla u_h } \cdot  \jump{\nabla v_h} \, ds
+\sum_{e \in \tilde{\Gamma}_h} \frac{\gamma_0}{h_e^3} \int_e \jump{u_h}  \jump{v_h} \, ds  \label{eq:discrete_bilinear} 
\end{align}
and 
\begin{align}
l_h(v_h) := \sum_{T \in \cT_h}  &\int_T f \, v_h \, dx
- \sum_{e \in \Gamma_h^D } \int_e \avg{\partial_n \nabla v_h} \cdot \Phi \, ds
+ \sum_{e \in \Gamma_h^D}  \int_e \avg{\partial_n \Delta v_h} g    \, ds  \nonumber \\
& \quad - \sum_{ e \in \Gamma_h^D}  \frac{\gamma_1}{h_e} \int_e \jump{\nabla v_h} \cdot \Phi   \, ds
-\sum_{e \in \Gamma_h^D} \frac{\gamma_0}{h_e^3}  \int_e \jump{v_h} \, g \, ds.  \label{eq:discrete_linear}
\end{align}
In \eqref{eq:discrete_bilinear} and \eqref{eq:discrete_linear}, 
$\gamma_0, \gamma_1 \in \RR_+$ are penalty parameters.
Notice that the discrete bilinear form does \emph{not} penalize jumps in $\nabla u_h$ across the fold $\cC$. 

Next, define  
\begin{equation}\label{eq:dg_norm1}
\| v_h \|_{DG}^2 := \sum_{T \in \cT_h} |v_h|^2_{2,T} 
+ \sum_{e \in \tilde{\Gamma}_h} \frac{\gamma_0}{h_e^3} \| \jump{v_h} \|_{L^2(e)}^2 
+ \sum_{e \in \tilde{\Gamma}_h \setminus \cC} \frac{\gamma_1}{h_e} \| \jump{\nabla v_h} \|_{L^2(e)}^2,
\end{equation}
which is a norm on the space $\cV^k_h$ for any positive numbers $\gamma_0$ and $\gamma_1$. As shown in 
\cite[Proposition 2.5]{MR4699572}, the bilinear form \eqref{eq:discrete_bilinear} is bounded 
and coercive with respect to the DG norm $\| \cdot \|_{DG}$ for sufficiently 
large $\gamma_0$ and $\gamma_1$, and the discrete problem 
\eqref{eq:discrete_problem} admits a unique solution. 

In the following subsections, we introduce the lifting and enriching operators, as well as their associated 
properties, that will be used throughout the \emph{a priori} and \emph{a posteriori} error analysis.

\subsection{The Lifting Operator}
Lifting operators were first introduced for elliptic problems in \cite{MR1765651} (see
also \cite{MR1910752} for their use in $hp$-analysis) and allow one to relate interelement discontinuities
of a finite element function $v$ and its derivatives to its values on elements $T \in\triang$.
We here introduce a {\it lifting operator } 
$L: \cV_h^k \longrightarrow [\mathcal{V}^k_h]^{2\times 2}$ that satisfies
\begin{align} \label{eq:lifting}
\sum_{T \in \mathcal{T}_h} \int_T L(v) : w  \, dx & := -  \sum_{e \in \tilde{\Gamma}_h} \int_e\sjump{v} \avg{\text{div } w \cdot n} dx 
+  \sum_{e \in \tilde{\Gamma}_h \setminus \cC } \int_e \sjump{\nabla v} \cdot \avg{ w n} ds,
\end{align}
for all $w \in [\mathcal{V}^k_h]^{2\times 2}$.
Notice that gradient jumps are neglected along the fold $\mathcal{C}$, which is consistent
with the admissible function set \eqref{eq:admissible_fns}.
\begin{remark}
An important property of $L(\cdot)$ is that it satisfies the following stability bounds:
\begin{align} \label{stabilityL1}
\|L(v)\|^2_{L^2(\Omega)} \lesssim  \sum_{e \in \tilde{\Gamma}_h} \big \|\sqrt{\frac{\gamma_0}{h_e^3}} \sjump{v}\big \|^2_{L^2(e)} 
+ \sum_{e \in \tilde{\Gamma}_h \setminus \cC} \big \|\sqrt{\frac{\gamma_1}{h_e}} \sjump{ \nabla v} \big \|^2_{L^2(e)}, 
\quad \forall v \in \mathcal{V}_h^k.
\end{align}
A proof of \eqref{stabilityL1} is analogous to that in \cite[Lemma 5.1]{MR2520159} with the modification that gradient jumps are neglected on the interface $\mathcal{C}$ in the definition of $L(\cdot)$.
\end{remark}
\begin{remark}
By the definition \eqref{eq:lifting}, we can rewrite $a_h(\cdot, \cdot)$ as follows: 
\begin{align}
a_h(v,w) 
& = \sum_{T \in \mathcal{T }_h} \int_T (D^2v :D^2w + L(v) : D^2w +  L(w) : D^2v) dx 
+  \sum_{e \in \tilde{\Gamma}_h} \frac{\gamma_0}{h_e^3} \int_e  \sjump{v} \sjump{w} ds \notag \\ 
& \qquad +  \sum_{e \in \tilde{\Gamma}_h \setminus \mathcal{C}} \frac{\gamma_1}{h_e}\int_e  \sjump{\nabla v} \cdot \sjump{\nabla w} ds \quad \forall v, w \in \mathcal{V}^k_h. \label{eq:discrete}
\end{align}
\end{remark}
\begin{remark}
For the analysis below, we will extend the domain of the lifting operator $L(\cdot)$ and both
$a_h(\cdot, \cdot)$ and $l_h(\cdot)$ to
$\mathcal{W}^k:=\mathcal{V}^k_h + \cV(g, \Phi):= 
\{a_h + b~|~a_h \in \mathcal{V}^k_h \text{ and } b \in \cV(g, \Phi)\}$.
\end{remark}

\subsection{The Enriching Operator} \label{subsec:enriching}
We now construct an enriching operator $E_h$ that maps functions in $\mathcal{V}_h^k$ onto a $C^1$-conforming space 
$$
\widetilde{\cV}_h^{k+2} := \big\{ v \in C^1(\overline{\Omega}) \, \big\vert \, \,\, v \vert_{T} \in \widetilde{\PP}_m \, \, \forall T \in \cT_h \big\}
$$
where for any $T \in \cT_h$ and $m = k+2$, 
$$
\widetilde{\PP}_{k+2} := \big \{v \in C^1(T)~\big\vert~v|_{T_i} \in \mathbb{P}_{k+2}(T_i) \quad \forall i = 1,2,3 \big \}.
$$
Here $T_1,T_2,T_3$ are three subelements of $T \in \cT_h$, as depicted in Figure \ref{fig:elements} for $k=2$. 

The degrees of freedom of $\widetilde{\cV}_h^{k+2}$ consist of the nodal function values of each $T_i$ ($1\le i \le 3$), 
all of the partial derivatives on the nodes $T$, all of the partial derivatives on the intersection point $z = T_1 \cap T_2 \cap T_3$, 
and the normal derivatives on two distinct inner points of all edges $e_1,e_2,e_3$ of $T$. 
See Figure \ref{fig:elements} for visualization. We here adapt the same definition of enriching operator as introduced in \cite[Lemma 3.7]{tschernernumerical} to our linear folding model and then, the following approximation properties hold:
\begin{lemma}
	Let $\beta \in \{0,1,2\}$ and $|\cdot|_{0,T}:= \|\cdot\|_{L^2(T)}$, then there exists 
$E_h : \mathcal{V}_h^k \longrightarrow  \widetilde{V}_h^{k+2} \cap \cV(g, \Phi)$ satisfying
	\begin{align} 
	\sum_{T   \in \cT_h} |v_h - &E_h(v_h)|^2_{\beta,T}   \nonumber \\
&\lesssim \sum_{e \in \tilde{\Gamma}_h} \|h^{\frac{1}{2}-\beta} \sjump{v_h}\|^2_{L^2(e)} 
+  \sum_{e \in \tilde{\Gamma}_h \setminus \cC} \|h^{\frac{3}{2}-\beta} \sjump{\nabla v_h}\|^2_{L^2(e)}   \label{estimate:enriching}
	\end{align}
for any $v_h \in \cV_h^k$. 
\end{lemma}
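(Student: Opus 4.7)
The plan is to define $E_h$ by nodally specifying each degree of freedom (DoF) of the macroelement space $\widetilde{\cV}_h^{k+2}$, following the construction of \cite[Lemma 3.7]{tschernernumerical}. DoFs on $\partial_D\Omega$ are set from $g, \Phi$ so that $E_h(v_h) \in \cV(g,\Phi)$; DoFs at interior points not lying on $\cC$ are assigned the arithmetic mean of the corresponding values (or partial derivatives) of $v_h|_T$ over the elements $T \in \cT_h$ sharing that point; and DoFs located on $\cC$ are handled one-sidedly, so that no averaging of a derivative ever straddles the fold. This last rule is what will keep the forthcoming estimate free of $\sjump{\nabla v_h}$-contributions along $\cC$.

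With $E_h$ in hand, fix $T \in \cT_h$ and set $w := v_h - E_h(v_h)$. Since $w|_T$ lives in a finite-dimensional subspace of an HCT-type macroelement whose polynomials are controlled by the $\ell^2$-sum of their DoF values, a standard scaling from a reference macroelement yields
\begin{equation*}
|w|^2_{\beta,T} \;\lesssim\; \sum_{\mathcal{D}_j\in T} h_T^{\,2-2\beta+2r_j}\,|\mathcal{D}_j(w)|^2,
\end{equation*}
where $r_j\in\{0,1\}$ is the differentiation order carried by $\mathcal{D}_j$. By construction, each $\mathcal{D}_j(w)$ is a (telescoping) combination of pointwise differences of $v_h$ or $\nabla v_h$ across neighbouring elements, and is therefore bounded by an $L^\infty(e)$ norm of $\sjump{v_h}$ on some $e\in\tilde{\Gamma}_h$ (when $r_j=0$) or of $\sjump{\nabla v_h}$ on some $e\in\tilde{\Gamma}_h\setminus\cC$ (when $r_j=1$). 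A standard inverse trace inequality $\|\cdot\|_{L^\infty(e)}^2\lesssim h_e^{-1}\|\cdot\|_{L^2(e)}^2$ then produces the factors $h_e^{1-2\beta}$ and $h_e^{3-2\beta}$ in front of $\|\sjump{v_h}\|_{L^2(e)}^2$ and $\|\sjump{\nabla v_h}\|_{L^2(e)}^2$, respectively, and summing over $T \in \cT_h$ recovers \eqref{estimate:enriching}.

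The main obstacle is the treatment of the fold $\cC$. A naive two-sided average of a gradient DoF at a node on $\cC$ would force $\mathcal{D}_j(w)$ to contain a $\sjump{\nabla v_h}$ contribution across an edge of $\cC$, which is absent from the right-hand side of \eqref{estimate:enriching}. The one-sided rule above circumvents this by arranging that every telescoping path used to express a gradient-type $\mathcal{D}_j(w)$ stays within a single subdomain $\Omega_i$ and therefore uses only edges in $\tilde{\Gamma}_h\setminus\cC$. Verifying that such intra-subdomain paths exist for every fold-adjacent DoF, and checking that the resulting operator is well-defined with image in $\widetilde{\cV}_h^{k+2}\cap\cV(g,\Phi)$, is the delicate bookkeeping step; it exploits the fact that the rich $\widetilde{\PP}_{k+2}$ DoF set carries enough nodal data at each vertex of $\cC$ to prescribe derivative values on each side of the fold independently.
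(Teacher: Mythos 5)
Your proposal is correct and follows essentially the same route as the paper's (cited) proof: nodal averaging onto the $C^1$ macroelement space, a reference-element scaling bound for $v_h-E_h v_h$ in terms of its DoF values, and telescoping of those DoF differences into edge jumps controlled via inverse trace inequalities, with the fold treated so that no derivative DoF averages across $\cC$. The one point to make fully explicit is that ``one-sided'' at the fold must mean assigning \emph{independent} derivative values on each side of $\cC$ (so the image is $C^1$ only on each $\Omega_i$ and merely $C^0$ across $\cC$, consistent with $\cV(g,\Phi)$), since a single value borrowed from one side would still force the elements on the opposite side to pick up $\sjump{\nabla v_h}$ across the fold --- your closing sentence indicates you intend exactly this.
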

\begin{proof}
	See \cite[Lemma 3.7]{tschernernumerical} for the proof.
\end{proof}
\begin{remark}(Stability of $E_h$) Using the definitions of $E_h$ and $\| \cdot \|_{DG}$, the triangle inequality, 
and estimate \eqref{estimate:enriching} for $\beta=2$, the following immediately holds for all $ v_h \in \cV_h^k$:
	\begin{align} 
	\| E_h v_h \|^2_{DG}  & =  \sum_{T \in \cT_h} |E_h   v_h|^2_{2,T} \nonumber \\ & \leq \sum_{T \in \cT_h} |v_h|^2_{2,T} + \sum_{T \in \cT_h} |E_h v_h  -  v_h|^2_{2,T}   \lesssim \| v_h \|^2_{DG}. \label{estimate:stabilityenriching}
	\end{align} 
\end{remark}
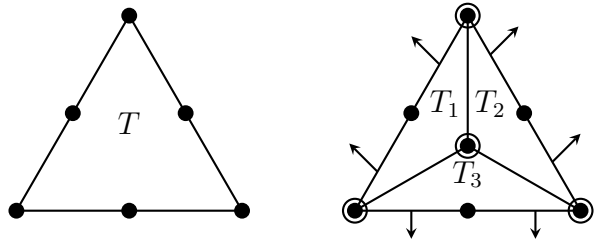
\begin{figure} 
	\begin{tikzpicture}[xscale=0.75, yscale=0.75]
	
	
	\coordinate (A) at (-6,0);
	\coordinate (B) at (-2,0);
	\coordinate (C) at (-4,3.46);
	
        \draw (-4, 1.2)node[above]{\Large{$T$}};

	\draw[thick] (A) -- (B) -- (C) -- cycle;
	
	\coordinate (AB) at ($(A)!0.5!(B)$);
	\coordinate (BC) at ($(B)!0.5!(C)$);
	\coordinate (CA) at ($(C)!0.5!(A)$);
	
	\fill (A) circle (4pt);
	\fill (B) circle (4pt);
	\fill (C) circle (4pt);
	
	\fill (AB) circle (4pt);
	\fill (BC) circle (4pt);
	\fill (CA) circle (4pt);
	
	
 \coordinate (A) at (0,0);
\coordinate (B) at (4,0);
\coordinate (C) at (2,3.46);

\draw (1.6, 1.5)node[above]{\Large{$T_1$}};
\draw (2.4, 1.5)node[above]{\Large{$T_2$}};
\draw (2.0, 0.25)node[above]{\Large{$T_3$}};

\draw[thick] (A) -- (B) -- (C) -- cycle;

\coordinate (AB) at ($(A)!0.5!(B)$);
\coordinate (BC) at ($(B)!0.5!(C)$);
\coordinate (CA) at ($(C)!0.5!(A)$);

\coordinate (AB1) at ($(A)!0.25!(B)$);
\coordinate (AB2) at ($(A)!0.8!(B)$);
\coordinate (BC1) at ($(B)!0.25!(C)$);
\coordinate (BC2) at ($(B)!0.8!(C)$);
\coordinate (CA1) at ($(C)!0.25!(A)$);
\coordinate (CA2) at ($(C)!0.8!(A)$);

\foreach \point in {A, B, C} {
	\draw[thick] (\point) circle (6pt);
}

\draw[thick,->,>=stealth,scale=0.5] (AB1) -- ++(0,-1);
\draw[thick,->,>=stealth,scale=0.5] (AB2) -- ++(0,-1);
\draw[thick,->,>=stealth,scale=0.5] (BC1) -- ++(1,1);
\draw[thick,->,>=stealth,scale=0.5] (BC2) -- ++(1,1);
\draw[thick,->,>=stealth,scale=0.5] (CA1) -- ++(-1,1);
\draw[thick,->,>=stealth,scale=0.5] (CA2) -- ++(-1,1);

\fill (A) circle (4pt);
\fill (B) circle (4pt);
\fill (C) circle (4pt);
\fill (AB) circle (4pt);
\fill (BC) circle (4pt);
\fill (CA) circle (4pt);

\coordinate (P) at (2,1.15);
\fill (P) circle (4pt);
\foreach \point in {P} {
	\draw[thick] (\point) circle (6pt);
}

\draw[thick] (P) -- (A);
\draw[thick] (P) -- (B);
\draw[thick] (P) -- (C);


	
	\end{tikzpicture}
	\caption{
$\mathbb{P}_2$ Lagrange finite element on an element $T$ (left) and corresponding 
$C^1$-conforming $\widetilde{\mathbb{P}_4}$ macro element with nodal function values (dots), partial derivatives (circles) and normal derivatives (arrows)
on $T$ and its three subelements $T_1$, $T_2$ and $T_3$. }
	\label{fig:elements}
\end{figure}


\section{Novel bubble function and its properties}
\label{sec:bubble}
In this section, we introduce the novel edge bubble function which will be subsequently used in 
proving efficiency estimates. For any interior edge $\ehat  \in \partial T_- \cap \partial T_+$,
$T \in \{T_-, T_+\}$, and $\cT_{\ehat} := T_+ \cup T_-$, let $n_+$ be the unit normal vector
pointing outwards from $T_{+}$ into $T_-$, and let $\nhat := n_+$ and $n_- := -n_+$. 
Additionally, let $\tau_+$ be the unit tangential vector to $\ehat$, and denote $\tauhat := \tau_+$ and 
$\tau_- := -\tau_+$
(cf.~Figure \ref{fig:rhombus}).

For $\alpha \in \{\tauhat, \nhat\}$, 
let $\phi_{T,\alpha} : \cT_{\ehat } \longrightarrow \mathbb{R}$ 
be a function defined by $\phi_{T,\alpha} : = \psi_{\alpha} \Lambda^4_{T,1}\Lambda^4_{T,3}$, where $\Lambda_{T,i}$ is the $i$th 
barycentric coordinate corresponding to $T$ (see \cite[Section 3.2]{MR4793681}). By definition, we have $\Lambda_{T,i}(x_j) = \delta_{ij},$ where 
$x_j$ are the nodes of $T$ (see Figure \ref{fig:rhombus}) and $\delta_{ij}$ is the Kronecker delta function. 
Without loss of generality, let $\Lambda_{T_+,1}$ and $\Lambda_{T_+,3}$ 
	vanish on $AP_+$ and $P_+B$, respectively, and let $\Lambda_{T_-,1}$ and $\Lambda_{T_-,3}$  vanish on $AP_-$ and $P_-B$, respectively. 
	We define $\psi_{\alpha}: \cT_{\ehat } \longrightarrow \mathbb{R}$ to be a continuous, piecewise affine function that assumes the value zero 
	along the common edge $\ehat $, such that $\nabla \psi_{\alpha}|_{T_{\pm}} = h_{\ehat }^{-1} \alpha_{\pm}/2$, where $\alpha_{\pm}$ is the 
	unit normal/tangential to $T_{\pm}$. 
Notice that $\nabla \psi_{\alpha}$ has a discontinuity at the common edge $\ehat $, and that, by construction, 
\begin{equation} \label{eq:avg_grad_psi}
\avg{\nabla \psi_{\nhat}}\vert_{\ehat} = \frac12 h_{\ehat}^{-1}( n_+ + n_-) = 0  
\quad \text{and} \quad
\avg{\nabla \psi_{\tauhat}}\vert_{\ehat} = \frac12 h_{\ehat}^{-1}( \tau_+ + \tau_-) = 0 . 
\end{equation} 
		\begin{figure}[!hh]
		\begin{center}
			\setlength{\unitlength}{0.3cm}
			\begin{picture}(12,6)
			\put(2,1){\line(1,0){8}} 
			\put(2,1){\line(-1,1){4}} 
			\put(6,5){\line(-1,0){8}} 
			\put(10,1){\line(-1,1){4}} 
			
			\put(2,1){\line(1,1){4}}
			
			
			\put(1,0.39){$A$} 
			\put(5.95,5.25){$B$} 
			\put(10,0.29){$P_{-}$}
			\put(-3.5,5.2){$P_{+}$} 
			\put(-0.5,3.95){$T_{+}$}
			\put(7.5,1.5){$T_{-}$} 
			
			\thicklines
			\put(4,3){\vector(1,-1){1.2}} 
			\thinlines
			\put(3.8,1.4){$\hat{n}$} 

			\thicklines
			\put(4,3){\vector(1,1){1.2}} 
			\thinlines
			\put(5.3,3.4){$\hat{\tau}$} 
			
			\put(2.90,2.9){$\ehat $}

			
			\end{picture}
			\caption{The rhombus $\cT_{\ehat}  : = T_{-}\cup T_{+}$ with its common edge $\ehat =\partial T_-\cap\partial T_+$ that 
				has initial node $A$,
				end node $B$, unit normal vector $\hat{n}$ and unit tangential vector $\tauhat$. 
				}
			\label{fig:rhombus}
		\end{center}
	\end{figure}
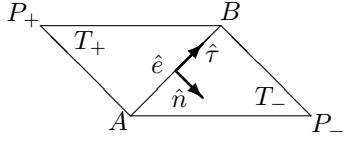
Using the above definitions of $\phi_{T,\alpha}$ and $\psi_{\alpha}$,  
we introduce the novel bubble function $\phi_{\ehat,\alpha } : \Omega \longrightarrow \mathbb{R}$ defined by
\begin{align} \label{novelbubble}
\phi_{\ehat,\alpha }|_{T} := \phi_{T,\alpha} \quad \text{where } T \in \cT_{\ehat }, \quad \text{and} \quad \phi_{\ehat,\alpha }:= 0  \quad \text{on } \Omega \setminus \cT_{\ehat }.
\end{align}
Note that, by construction, $\phi_{\ehat,\alpha}$ is an element of $H^1(\Omega) \cap H^2(\Omega_1 \cup \Omega_2)$. 
The following lemma contains some additional, useful 
properties of $\phi_{\ehat,\alpha }$.
\begin{lemma} \label{bubble:properties}
	Let $\alpha \in \{\nhat, \tauhat\}$, and let $\phi_{\ehat,\alpha}$ be defined as in \eqref{novelbubble}, then the following holds:
	\begin{align} 
	\sjump{\phi_{\ehat,\alpha }}  =\avg{ \phi_{\ehat,\alpha } } = 0 & \quad \text{ and }  \avg{ \nabla \phi_{\ehat,\alpha } }  = 0 \text{ on } \Gamma_h,  \label{bubbleproperties1} \\
	\sjump{\nabla \phi_{\ehat,\alpha }}  = 0 \text{ on } \Gamma_h \setminus \ehat  \quad  
	& \text{and } \sjump{\nabla \phi_{\ehat,\alpha }}|_{\ehat } = (h_{\ehat }^{-1} \Lambda^4_{T,1}  \Lambda^4_{T,3} \alpha)|_{{\ehat }}. \label{bubbleproperties2}
	\end{align}
\end{lemma}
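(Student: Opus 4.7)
The plan is to handle the claims edge by edge, partitioning $\Gamma_h$ into three types: (i) edges disjoint from $\mathcal{T}_{\hat{e}}$; (ii) the four ``outer'' edges $AP_\pm$ and $P_\pm B$ of the rhombus $\mathcal{T}_{\hat{e}}$; and (iii) the shared edge $\hat{e}$ itself. For type (i), the bubble is identically zero by construction, so all jumps and averages vanish trivially. The real work lies in types (ii) and (iii), both of which reduce to careful use of the product structure $\phi_{T,\alpha}=\psi_\alpha\,\Lambda_{T,1}^{4}\Lambda_{T,3}^{4}$.

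For type (ii), I would first observe that, by the indexing convention specified just before \eqref{novelbubble}, $\Lambda_{T_+,1}$ vanishes on $AP_+$ and $\Lambda_{T_+,3}$ on $P_+B$, with analogous statements for $T_-$. Hence $\phi_{T,\alpha}$ vanishes along each outer edge $e'$. Differentiating,
\[
\nabla\phi_{T,\alpha}=\Lambda_{T,1}^{4}\Lambda_{T,3}^{4}\,\nabla\psi_\alpha
+4\psi_\alpha\Lambda_{T,1}^{3}\Lambda_{T,3}^{4}\nabla\Lambda_{T,1}
+4\psi_\alpha\Lambda_{T,1}^{4}\Lambda_{T,3}^{3}\nabla\Lambda_{T,3},
\]
and each of the three summands still carries at least a third power of whichever barycentric coordinate vanishes along $e'$. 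Consequently both $\phi_{\hat{e},\alpha}$ and $\nabla\phi_{\hat{e},\alpha}$ vanish on $e'$ from the $\mathcal{T}_{\hat{e}}$ side, while the other side lies outside $\mathcal{T}_{\hat{e}}$ and is zero. All four claims on outer edges follow.

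For type (iii), the edge $\hat{e}$, the key input is that $\psi_\alpha\equiv 0$ on $\hat{e}$, so the product $\phi_{T,\alpha}$ is zero on $\hat{e}$ from both sides, giving $\sjump{\phi_{\hat{e},\alpha}}=\avg{\phi_{\hat{e},\alpha}}=0$. For the gradient, the two summands containing the factor $\psi_\alpha$ drop out on $\hat{e}$, leaving
\[
\nabla\phi_{T_\pm,\alpha}\big|_{\hat{e}}
=\Lambda_{T_\pm,1}^{4}\Lambda_{T_\pm,3}^{4}\,\nabla\psi_\alpha\big|_{T_\pm}
=\tfrac{1}{2}h_{\hat{e}}^{-1}\alpha_\pm\,\Lambda_{T_\pm,1}^{4}\Lambda_{T_\pm,3}^{4}\big|_{\hat{e}}.
\]
I would then note that the two restrictions $\Lambda_{T_\pm,1}^{4}\Lambda_{T_\pm,3}^{4}|_{\hat{e}}$ coincide: in each of $T_+$ and $T_-$, the barycentric coordinate labeled $1$ is the one associated with the endpoint $B$ of $\hat{e}$ and the one labeled $3$ with $A$, so along $\hat{e}$ they are determined entirely by arclength and agree from both sides. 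With $\alpha_+=\hat{\alpha}=\alpha$ and $\alpha_-=-\alpha$, taking the half-sum and the difference yields $\avg{\nabla\phi_{\hat{e},\alpha}}|_{\hat{e}}=0$ and $\sjump{\nabla\phi_{\hat{e},\alpha}}|_{\hat{e}}=h_{\hat{e}}^{-1}\Lambda_{T,1}^{4}\Lambda_{T,3}^{4}\alpha$, as claimed.

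The only step I expect to be subtle is the identification of $\Lambda_{T_+,1}^{4}\Lambda_{T_+,3}^{4}|_{\hat{e}}$ with $\Lambda_{T_-,1}^{4}\Lambda_{T_-,3}^{4}|_{\hat{e}}$; it rests on the specific labeling convention adopted for the vanishing edges of $\Lambda_{T,1}$ and $\Lambda_{T,3}$ and is what makes the one-sided formulas patch together into a single expression on $\hat{e}$. Everything else is a direct computation from the product rule, the piecewise affinity of $\psi_\alpha$, and the fact that $\phi_{\hat{e},\alpha}$ vanishes outside $\mathcal{T}_{\hat{e}}$.
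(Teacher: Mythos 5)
Your proposal is correct and follows essentially the same argument as the paper: both rest on the product structure $\phi_{T,\alpha}=\psi_\alpha\Lambda_{T,1}^4\Lambda_{T,3}^4$, the vanishing of $\psi_\alpha$ on $\hat e$ together with $\nabla\psi_\alpha|_{T_\pm}=h_{\hat e}^{-1}\alpha_\pm/2$, and the fact that the barycentric product and its gradient vanish on the outer edges. The only (cosmetic) difference is that the paper carries out the computation on a pair of reference triangles and pulls back via the affine map, whereas you work directly on the physical elements and make explicit the point the paper handles implicitly, namely that the labeling convention forces $\Lambda_{T_+,1}^4\Lambda_{T_+,3}^4$ and $\Lambda_{T_-,1}^4\Lambda_{T_-,3}^4$ to agree along $\hat e$ so the two one-sided formulas patch together.
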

\begin{proof}
	Since $\phi_{\ehat,\alpha}$ is only supported on $\cT_{\ehat}$, it suffices 
to verify \eqref{bubbleproperties1} and \eqref{bubbleproperties2} on any edge $e \in \partial T$, $T \in \cT_{\ehat}$.
For fixed $T \in \cT_h$, let $G_T: \tilde{T} \longrightarrow T$ be the standard unique, invertible 
	affine map \cite[Section 2.3]{MR0520174}, where $\tilde{T} \in \{\tilde{T}_+, \tilde{T}_- \}$ is 
	the reference element (cf.~Figure \ref{fig:reference-triangles}). We construct a  
	bubble function $\phi_{\tilde{T},\tilde\alpha}$ on $\tilde{T}$,  
	 prove \eqref{bubbleproperties1} and \eqref{bubbleproperties2} on $\tilde{T}$, and then set $\phi_{T,\alpha}:= \phi_{\tilde{T},\tilde\alpha} \circ G^{-1}_T$.
	\begin{figure}[!hh]
		\centering
		\begin{tikzpicture}[scale=3]
		\draw[thick] (0,0) -- (0.5,0) -- (0,0.5) -- cycle;
		
		\draw[thick] (0,0) -- (0.5,0) -- (0,-0.5) -- cycle;
		
		\node[below left] at (0,0.1) {$O \, (0,0)$};
		\node[below] at (0.7,0) {$A \, (1,0)$};
		\node[above left] at (0,0.4) {$B \, (0,1)$};
		\node[below left] at (0,-0.4) {$D \, (0,-1)$};
		
		\put(1.39,22){$\tilde{T}_+$}
		\put(1.39,-26){$\tilde{T}_-$}
		\put(15,2){$\tilde{e}$}
		\end{tikzpicture}
		\caption{Two reference elements with vertices $O$, $A$, $B$, and $D$, and their coordinates. The edge $y=0$ is labeled as $\tilde{e}$.}
		\label{fig:reference-triangles}
	\end{figure}
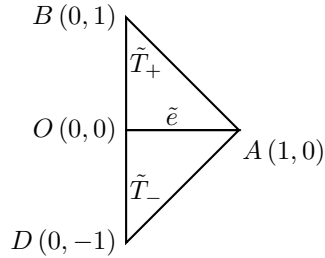
	For $\tilde{\cT}_{\tilde e} := \tilde{T}_+ \cup \tilde{T}_-$, we define $\phi_{\tilde{T},\tilde\alpha}: \tilde{\cT_e} \longrightarrow \mathbb{R}$ as
	$$
	\phi_{\tilde{T},\tilde\alpha}:= \tilde \psi_{\tilde\alpha} \Lambda^4_{\tilde{T},1} \Lambda^4_{\tilde{T},3}
	$$
	where $\Lambda_{\tilde{T},i}$ are both linear polynomials defined on $\tilde{T}$. 
	In particular, $\Lambda_{\tilde{T}_+,1}$ and $\Lambda_{\tilde{T}_-,1}$ vanish on the sides $OB$ and $OD$, respectively, 
	while $\Lambda_{\tilde{T},3}$ vanishes on $BA$ (for $\tilde{T}_{+}$) and $AD$ (for $\tilde{T}_{-}$). 
	For instance, we take $\Lambda_{\tilde{T}_+,1} := \tilde{x}$ 
	and $\Lambda_{\tilde{T}_+,3} := 1-\tilde{x}-\tilde{y}$;  one can similarly define 
	$\Lambda_{\tilde{T}_-,i}$. 
	We also introduce the continuous, piecewise affine 
	function $\tilde \psi_{\tilde\alpha}: \tilde{\cT}_{\tilde{e}} \longrightarrow \mathbb{R}$ that vanishes on the
	common edge $\tilde{e}$ and satisfies $\nabla \tilde \psi_{\tilde\alpha}|_{\tilde{T}_{\pm}} = \pm h_{\tilde{e}}^{-1} \tilde\alpha/2$. 
	Here, $\tilde\alpha$ is either $n_{\tilde e} := (0,1)$ (i.e.\ the unit normal vector to $\tilde e$ pointing from $\tilde{T}_-$ to $\tilde{T}_+$)
	or $\tau_{\tilde e} := (1,0)$ (i.e.\ the unit tangential vector parallel to $\tilde e$). 
	Hence, $\nabla \tilde \psi_{\tilde\alpha}$ has a discontinuity on $\tilde{e}$. We now 
	prove \eqref{bubbleproperties1}. For $e \in \partial \tilde{T}$ and $ e \neq \tilde{e}$, it is 
	clear from the definition that 
	$$
	\jump{\phi_{\tilde{e},\tilde\alpha}} := \phi_{\tilde{T},\tilde\alpha} = 0 
	\quad \text{and} \quad \avg{\phi_{{\tilde e},\tilde\alpha}} := \phi_{\tilde{T},\tilde\alpha} = 0 
	$$
	Using that $\phi_{\tilde{T},\tilde\alpha} = 0$ on $\tilde{e}$, the following clearly holds on $\tilde{e}$:
	\begin{align*}
	\sjump{\phi_{\tilde{e},\tilde\alpha}} := \phi_{\tilde{T}_+,\tilde\alpha} - \phi_{\tilde{T}_+,\tilde\alpha} = 0 \quad \text{and} \quad 
	\avg{ \phi_{\tilde{e},\tilde\alpha} } := \frac{\phi_{\tilde{T}_+,\tilde\alpha} +  \phi_{\tilde{T}_+,\tilde\alpha}}{2} = 0.
	\end{align*}
	Next, let $d_{\tilde{T}}:= \Lambda^4_{\tilde{T},1} \Lambda^4_{\tilde{T},3}$. For $e \in \partial \tilde{T}$, $ e\ne\tilde e$, 
	we note that by the product rule,
	\begin{align} \label{treff1}
	\avg{ \nabla \phi_{\tilde{e},\tilde\alpha} } := \nabla \phi_{\tilde{T},\tilde\alpha} = (d_{\tilde{T}} \nabla \tilde \psi_{\tilde\alpha} + \tilde \psi_{\tilde\alpha} \nabla d_{\tilde{T}}) = 0 .
	\end{align}
	In \eqref{treff1}, we have used that both $d_{\tilde T}$ and $\nabla d_{\tilde T}$ vanish 
	for any $e \ne \tilde e$. On $\tilde{e}$, we have that
	\begin{align*}  
	\avg{ \nabla \phi_{\tilde{e},\tilde\alpha} }  
	&:= \frac{(\nabla \phi_{\tilde{T}_+,\tilde\alpha})|_{\tilde{e}} + (\nabla \phi_{\tilde{T}_-,\tilde\alpha})|_{\tilde{e}}}{2} 
	= d_{\tilde{T}} \Big( \frac{h_{\tilde e}^{-1} \tilde\alpha  - h_{\tilde e}^{-1} \tilde\alpha}{4}\Big) = 0,
	\end{align*}
	since $\tilde \psi_{\tilde\alpha} = 0$ and $d_{\tilde{T}_+} = d_{\tilde{T}_-} = \tilde{x}^4 (1-\tilde x)^4$ on $\tilde e$. 
	This fully establishes \eqref{bubbleproperties1}. To prove \eqref{bubbleproperties2}, 
	we similarly have $\sjump{\nabla \phi_{\tilde{e},\tilde\alpha}}  = 0 \text{ on } \partial \tilde{T} \setminus \tilde{e}.$ 
	Using the properties of $\tilde \psi_{\tilde\alpha}$ and $d_{\tilde T}$ just mentioned, it follows that
	\begin{align*}
	\jump{\nabla \phi_{\tilde{e},\tilde\alpha}} \vert_{\tilde e} 
	& := (\nabla \phi_{\tilde{T}_+,\tilde\alpha})|_{\tilde e} - (\nabla \phi_{\tilde{T}_-,\tilde\alpha})|_{\tilde e }  \notag  \\ 
	& = d_{\tilde{T}_+} \nabla \tilde \psi_{\tilde\alpha} \vert_{\tilde T_+} - d_{\tilde{T}_-} \nabla \tilde \psi_{\tilde\alpha} \vert_{\tilde{T}_-}
	= h_{\tilde{e}}^{-1}  \tilde\alpha \Lambda^4_{\tilde{T},1} \Lambda^4_{\tilde{T},3},
	\end{align*}
	as desired.
\end{proof}
To prove another key property of $\phi_{\ehat,\alpha }$, we define an additional function $\zeta_{\alpha}: \cT_{\ehat}\to\RR$. Given 
$v_h \in \cV_h^k$, we define $\zeta_{\alpha}$ to equal  
$h_{\ehat }^{-1} \avg{ \partial_{\hat{n}}\nabla v_h}  \cdot \alpha$ on $\ehat$, where $\alpha \in \{\nhat,\tauhat\}$. We then extend $\zeta_{\alpha}$ to all of $\cT_{\ehat}$
by constants along
lines normal to $\ehat $. Next, we state a key lemma which will be used in the later analysis.
\begin{lemma} \label{bubblelifting}
	Let $L(\cdot)$ be the lifting operator defined in \eqref{eq:lifting} and  $\ehat \in \cC$ be any edge on the interface. 
        Then, for any $v_h \in \cV_h^k$, 
	$$
	\sum_{T \in \cT_h} \int_T L(v_h) : D^2 v \, dx = 0 
	$$
	where $v := \zeta_{\alpha} \phi_{\ehat,\alpha} \in \cV(0,0)$ and $\alpha \in \{\nhat, \tauhat\}$. 
\end{lemma}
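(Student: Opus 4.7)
The plan is to combine element-wise integration by parts with the edge-wise properties of $v = \zeta_\alpha \phi_{\hat{e},\alpha}$ induced by Lemma \ref{bubble:properties}. I would first record the relevant boundary values of $v$. Since $\phi_{\hat{e},\alpha} = \psi_\alpha \Lambda^4_{T,1}\Lambda^4_{T,3}$ is supported on $\cT_{\hat{e}} = T_+ \cup T_-$ and the fourth-power factors $\Lambda^4_{T,i}$ annihilate both $\phi_{\hat{e},\alpha}$ and $\nabla \phi_{\hat{e},\alpha}$ on every edge of $\cT_{\hat{e}}$ other than $\hat{e}$, while $\psi_\alpha$ vanishes on $\hat{e}$ itself, one obtains $v = 0$ on every edge of $\cT_h$ and $\nabla v = 0$ on every edge except $\hat{e}$. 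On $\hat{e}$, the identity $\avg{\nabla \phi_{\hat{e},\alpha}}|_{\hat{e}} = 0$ from Lemma \ref{bubble:properties} together with the continuity of $\zeta_\alpha$ across $\hat{e}$ (by its constant-along-normal extension from $\hat{e}$) yields $\avg{\nabla v}|_{\hat{e}} = 0$, while the product rule combined with $\sjump{\nabla \phi_{\hat{e},\alpha}}|_{\hat{e}} = h_{\hat{e}}^{-1}\Lambda^4_{T,1}\Lambda^4_{T,3}\alpha$ gives $\sjump{\nabla v}|_{\hat{e}} = \zeta_\alpha h_{\hat{e}}^{-1}\Lambda^4_{T,1}\Lambda^4_{T,3}\alpha$.

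Element-wise integration by parts on each $T \in \cT_{\hat{e}}$, together with these edge relations, then collapses the bulk sum to
$$\sum_T \int_T L(v_h):D^2 v \, dx = \sum_{T\in\cT_{\hat{e}}} \int_T v\,\div\div L(v_h)\, dx + \int_{\hat{e}} \avg{L(v_h)\, n_{\hat{e}}}\cdot \sjump{\nabla v}\big|_{\hat{e}}\, ds,$$
since the $v$-weighted boundary term vanishes ($v = 0$ on all edges), the $\avg{\nabla v}$-weighted edge term vanishes, and the $\sjump{\nabla v}$-weighted edge term localizes to $\hat{e}$. The crux is then that these two pieces cancel. Since $L(v_h) \in [\cV_h^k]^{2\times 2}$, one has $\sum_T \int_T L(v_h):D^2 v = \sum_T \int_T L(v_h) : \Pi_h(D^2 v)$, where $\Pi_h$ is the element-wise $L^2$-projection onto $[\cV_h^k]^{2\times 2}$. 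Applying the defining identity \eqref{eq:lifting} with $w = \Pi_h(D^2 v)$ expresses the left side as edge integrals weighted by $\sjump{v_h}$ and $\sjump{\nabla v_h}$; since $\Pi_h(D^2 v)$ is supported on $\cT_{\hat{e}}$, only edges of $\cT_{\hat{e}}$ contribute, and the edge $\hat{e}\in \cC$ is automatically excluded from the second sum. The antisymmetry $D^2\phi_{\hat{e},\alpha}|_{T_+,\hat{e}} = -D^2\phi_{\hat{e},\alpha}|_{T_-,\hat{e}}$, which follows from $\nabla\psi_\alpha|_{T_\pm} = \pm h_{\hat{e}}^{-1}\alpha/2$ and $\psi_\alpha|_{\hat{e}} = 0$, forces $\avg{D^2 v}|_{\hat{e}} = 0$ and thereby kills the $\hat{e}$ contribution to the first sum at the moment level.

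The main obstacle is converting the pointwise vanishing of $D^2 v$ on the four edges $e \subset \partial \cT_{\hat{e}}\setminus \hat{e}$ (which follows from the order-4 vanishing of $\Lambda^4_{T,i}$) into vanishing of the projected moments $\avg{\div \Pi_h(D^2 v)\cdot n}$ and $\avg{\Pi_h(D^2 v)\,n}$ tested against $\sjump{v_h}$ and $\sjump{\nabla v_h}$ along these edges, since $\Pi_h$ does not preserve pointwise boundary values in general. I expect to resolve this either by exploiting the high-order vanishing of $\phi_{\hat{e},\alpha}$ against the $\PP_k$ degree of $\Pi_h$ to show the relevant edge moments are annihilated, or by bypassing the projection entirely and using the direct integration-by-parts representation above, then cancelling its two pieces against the defining identity \eqref{eq:lifting} tested with a polynomial tensor in $[\cV_h^k]^{2\times 2}$ supported on $\cT_{\hat{e}}$ and calibrated to match $D^2 v$ in the required moments.
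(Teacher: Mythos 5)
Your proposal does not close the argument, and two of its key claims are problematic. The step you yourself flag as ``the main obstacle'' is precisely where the proof has to happen, and neither of your suggested resolutions works as stated: passing to $w=\Pi_h(D^2 v)$ replaces the edge functionals $\avg{\partial_n\nabla v}$ and $\avg{\partial_n\Delta v}$ by $\avg{\Pi_h(D^2v)\,n}$ and $\avg{\text{div}\,\Pi_h(D^2v)\cdot n}$, and the elementwise $L^2$ projection onto $\mathbb{P}_k$ does not inherit the high-order vanishing of $D^2 v$ on the edges of $\cT_{\ehat}$ (project $x^4$ onto constants on a triangle touching $\{x=0\}$ to see this), so the projected edge moments need not vanish. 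The paper avoids this route entirely: it evaluates the right-hand side of \eqref{eq:lifting} directly at $w=D^2 v$ and shows pointwise that $\avg{\partial_n\nabla v}$ and $\avg{\partial_n\Delta v}$ vanish on every edge of $\partial T_+\cup\partial T_-$, using that $d_T=\Lambda^4_{T,1}\Lambda^4_{T,3}$ vanishes to fourth order on the edges other than $\ehat$, while on $\ehat$ one has $\psi_\alpha=0$, $\avg{\nabla\psi_\alpha}=0$, and $\zeta_\alpha$ constant along normal lines (so $\partial_n\zeta_\alpha=0$). The third-derivative average $\avg{\partial_n\Delta v}$ on $\ehat$ itself --- which multiplies $\sjump{v_h}$ and is \emph{not} excluded by the $\cC$-restriction --- is the genuinely delicate term, and your proposal never engages with it.

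Moreover, the antisymmetry claim $D^2\phi_{\ehat,\alpha}|_{T_+,\ehat}=-D^2\phi_{\ehat,\alpha}|_{T_-,\ehat}$ is false. On $\ehat$ one has $D^2\phi_{\ehat,\alpha}=\nabla\psi_\alpha\otimes\nabla d_T+\nabla d_T\otimes\nabla\psi_\alpha$, and while $\nabla\psi_\alpha$ flips sign across $\ehat$, $\nabla d_T$ is not single-valued there: on the reference configuration the normal derivatives $\partial_{\tilde y}\big(\tilde x^4(1-\tilde x\mp\tilde y)^4\big)\big|_{\tilde y=0}=\mp4\tilde x^4(1-\tilde x)^3$ have opposite signs, so
$\avg{D^2 v}\big|_{\ehat}=\tfrac{1}{4}\zeta_\alpha\big(\sjump{\nabla\psi_\alpha}\otimes\sjump{\nabla d_T}+\sjump{\nabla d_T}\otimes\sjump{\nabla\psi_\alpha}\big)\neq 0$ in general. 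Even if it vanished, $\avg{D^2v}|_{\ehat}$ is the wrong object for killing the $\sjump{v_h}$-weighted sum, which involves a third derivative of $v$. Finally, the assertion that the two pieces of your integration-by-parts identity ``cancel'' is exactly the statement to be proved, so it cannot be used as an input. Your correct observations --- the trace properties of $v$ and the localization to $\cT_{\ehat}$ --- reproduce Lemma \ref{bubble:properties}, but the heart of the proof, namely the vanishing of the second- and third-order edge averages of $v$, is missing.
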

\begin{proof}
By definition \eqref{eq:lifting}, we have
\begin{align*}
\sum_{T \in \cT_h} \int_T L(v_h) : D^2 v \, dx =  
-\sum_{e \in \tilde{\Gamma}_h} \int_e \jump{v_h} \avg{\partial_n \Delta v} \ ds
+ \sum_{e \in \tilde{\Gamma}_h\setminus \cC} \int_e \jump{\nabla v_h} \cdot \avg{\partial_n \nabla v} \ ds 
\end{align*}
Since $v$ is nonzero only on $\cT_{\ehat}$, it suffices to show that $\avg{\partial_n \nabla v}$ 
and $\avg{\partial_n \Delta v}$ vanish for $e \in \partial T_- \cup \partial T_+$. 
By construction, note that $\zeta_{\alpha}$ is smooth on $\cT_{\ehat}$. Using 
\eqref{bubbleproperties1}, it follows that 
\begin{equation}\label{eq:second_deriv_v}
\avg{\partial_n \nabla v }= \zeta_{\alpha} \, \avg{\partial_n \nabla \phi_{\ehat,\alpha}},
\end{equation}
and
\begin{align}
\avg{\partial_n \Delta v} = &
 \avg{\Delta \phi_{\ehat,\alpha}}\partial_n \zeta_{\alpha}
+ 2\avg{\partial_n \nabla \phi_{\ehat,\alpha}} \cdot \nabla \zeta_{\alpha} 
+ \avg{ \partial_n \Delta \phi_{\ehat,\alpha}} \zeta_{\alpha} \label{eq:third_deriv_v}
\end{align}
for any $e \in \partial T_- \cup \partial T_+$. 

Recall that for $T \in \{ T_+,T_-\}$, 
$\phi_{\ehat,\alpha}\vert_T = \psi_{\alpha} d_T$, where $d_T = \Lambda_{T,1}^4 \Lambda_{T,3}^4$ and 
$\psi_{\alpha}\vert_T$ is affine. Note that, by construction, $\avg{d_T} = d_T$ for any $e \in \partial T_{-} \cup \partial T_{+}$; 
the same holds true for all of the derivatives of $d_T$. 
Hence, by direct calculation,
\begin{align*}  
\avg{\partial_n \Delta \phi_{\ehat,\alpha}} &= \avg{\partial_n \psi_{\alpha}} \Delta d_T  
+ \avg{\psi_{\alpha}} \partial_n \Delta d_T + 2\partial_n \nabla d_T \cdot \avg{\nabla \psi_{\alpha}}. 
\end{align*}
Recall that $\Lambda_{T,1} \Lambda_{T,3}=0$ on any $e \ne \ehat$. 
Thus, by the chain rule, $d_T$ and 
all of its first, second, and third-order derivatives vanish for such edges. 
On $\ehat$, recall that $\avg{\psi_{\alpha}} = \psi_{\alpha} = 0$ and
$\avg{\nabla \psi_{\alpha}} = 0$ (cf.\ Eq.~\eqref{eq:avg_grad_psi}). 
Hence, the third term on the right-hand side of \eqref{eq:third_deriv_v} 
vanishes for all $ e\in \partial T_- \cup \partial T_+$. By analogous reasoning, 
the second-order derivative terms on the right-hand sides
of \eqref{eq:second_deriv_v} and \eqref{eq:third_deriv_v}
vanish as well for any $ e \in \partial T_{-} \cup \partial T_{+}$, giving the desired result. 
\end{proof}

\section{Reliability of the Error Estimator}
\label{sec:reliability}
We define the following error estimators: 
\begin{align}
\eta^2_1&:= \sum_{T   \in \cT_h } \|h_T^2 (f- \Delta^2 u_h)\|^2_{L^{2}(T)},
&&\eta^2_2:= \sum_{e \in \tilde{\Gamma}_h }\|h_e^{-\frac{3}{2}} \sjump{u_h}\|^2_{L^{2}(e)}, \nonumber \\
\eta^2_3&:=  \sum_{e \in \tilde{\Gamma}_h \setminus \cC}\|h_e^{-\frac{1}{2}} \sjump{ \nabla u_h}\|^2_{L^{2}(e)},
&&\eta^2_4:= \sum_{e \in \Gamma_h^{\rm int} } \|h_e^{\frac{1}{2}} \sjump{ \partial_n \nabla u_h}\|^2_{L^{2}(e)}, \nonumber \\
\eta^2_5&:=  \sum_{e \in \cC} \|h_e^{\frac{1}{2}} \avg{ \partial_n \nabla u_h} \|^2_{L^{2}(e)},  
&&\eta^2_6:=  \sum_{e \in \Gamma_h^{\rm int}} \|h_e^{\frac{3}{2}} \sjump{ \partial_n \Delta u_h} \|^2_{L^{2}(e)}. \label{eq:eta_defns}
\end{align}
The first estimator $\eta_1$ measures the standard element-wise PDE residual, while $\eta_2$, $\eta_3$, $\eta_4$, and $\eta_6$
measure the lack of $H^1$, $H^2$, $H^3$, and $H^4$ regularity of the discrete solution, respectively. 
The estimator $\eta_5$ measures
the extent to which the interface
condition 
$$
\partial_n \nabla u_h \big\vert_{\Omega_j} = 0, \qquad j \in \{1,2\}
$$
holds along the fold $\cC$. Please note that in making these definitions, we 
have modified the definition of the jumps in $u_h$ and $\nabla u_h$
along Dirichlet boundary edges; for any $e \in  \Gamma_h^D$ belonging 
to $T \in \triang$, we set
$$
\jump{u_h} := g - u_h \vert_T \quad \text{and} \quad
\jump{\nabla u_h} := \Phi - \nabla u_h \vert_T. 
$$
In this case, the linear form $l_h$ in the discrete problem (cf.\ Eq.~\eqref{eq:discrete_linear}) will no longer
contain the terms involving $g$ and $\Phi$. Note that only $\eta_2$ and $\eta_3$ are
affected by this modification.

The next theorem ensures the reliability of the error estimators $\eta_i$, $1 \le i \le 6$.
\begin{theorem}
	Let $u \in \mathcal{V}(g,\Phi)$ be the solution to \eqref{eq:variational_form},
	and let $u_h \in \mathcal{V}^k_h$ be a solution of the discrete problem \eqref{eq:discrete_problem}. 
	Then the following reliability estimate holds:
	\begin{align*}
	\|u-u_h\|^2_{DG} \lesssim \sum_{i=1}^{6} \eta_i^2.
	\end{align*}
\end{theorem}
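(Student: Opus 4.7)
The plan is to follow the standard enriching-operator decomposition used in \emph{a posteriori} analyses of nonconforming biharmonic methods. I split
$u - u_h = (u - E_h u_h) + (E_h u_h - u_h)$, where $E_h$ is the enriching operator from Subsection \ref{subsec:enriching}. For the nonconforming part, I apply estimate \eqref{estimate:enriching} with $\beta = 2$ together with the definition of $\|\cdot\|_{DG}$; because $E_h u_h$ is globally $C^1$ and satisfies the Dirichlet data, every jump contribution to $\|E_h u_h - u_h\|_{DG}$ reduces to a jump of $u_h$ or of $\nabla u_h$, producing exactly $\eta_2^2 + \eta_3^2$.

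Set $v := u - E_h u_h$. Since $E_h u_h \in \widetilde{\cV}_h^{k+2}\cap \cV(g,\Phi)$, I have $v \in \cV(0,0)$; moreover $v$ and $\nabla v$ have no jumps on interior non-crease edges (by $H^2(\Omega\setminus\cC)$ regularity) nor on Dirichlet edges, so $\|v\|_{DG}^2 = |v|_{H^2(\Omega\setminus \cC)}^2 = a(v,v)$. Adding and subtracting $u_h$, I write $a(v,v) = l(v) - a(u_h,v) - a(E_h u_h - u_h, v)$, understood in the broken sense. The last term is bounded by $\|E_h u_h - u_h\|_{DG}\,\|v\|_{DG}$, which has already been controlled.

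To extract $\eta_1,\eta_4,\eta_5,\eta_6$, I integrate $a(u_h,v)$ by parts twice element-by-element. The bulk residual pairs against $l(v)$ to yield $\sum_T\int_T (f-\Delta^2 u_h)v$, which is bounded by $\eta_1\,\|v\|_{DG}$ after subtracting a suitable Clément-type quasi-interpolant $I_h v$ (allowed since $v$ is the only factor that needs local averaging) and using the Poincaré-type estimate $\|v - I_h v\|_{L^2(T)} \lesssim h_T^2 |v|_{H^2(\omega_T)}$. The edge terms split according to edge type: on interior non-crease edges, $\sjump{v}=\sjump{\nabla v}=0$, so only the jumps $\sjump{\partial_n \nabla u_h}$ and $\sjump{\partial_n \Delta u_h}$ remain, producing $\eta_4$ and $\eta_6$ after Cauchy--Schwarz and trace inequalities applied to $v - I_h v$; boundary edges contribute nothing since $v=0$ and $\nabla v=0$ on $\partial_D\Omega$. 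On crease edges $e\in \cC$, the identity $\sjump{v}=0$ still holds so the $\avg{\partial_n \Delta u_h}\sjump{v}$ term drops; what survives is $\int_e \avg{\partial_n\nabla u_h}\cdot\sjump{\nabla v}\, ds$, which after Cauchy--Schwarz is exactly $\eta_5$ times $\|h_e^{-1/2}\sjump{\nabla v}\|_{L^2(e)}$. Combining everything and absorbing $\|v\|_{DG}$ gives the claimed bound.

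The main obstacle is the crease term $\int_{\cC} \avg{\partial_n\nabla u_h}\cdot\sjump{\nabla v}\, ds$. Unlike the other edge terms, I cannot cancel $\sjump{\nabla v}$ against a conforming interpolant, because $v$ genuinely has a gradient jump along $\cC$ that equals $\sjump{\nabla u}\neq 0$. The fix is to estimate $\|h_e^{-1/2}\sjump{\nabla v}\|_{L^2(e)}$ directly by $|v|_{H^2(\Omega_1\cup\Omega_2)}$ using a trace inequality applied independently on each side of the fold, for which $\nabla v\in H^1(\Omega_i)$ suffices. This keeps the factor $\|v\|_{DG}$ on the right-hand side intact, so that the contribution of $\eta_5$ can be absorbed on the left exactly as for the other estimators.
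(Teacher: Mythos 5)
Your overall architecture --- split off the conforming part $E_h u_h$, control $\|u_h - E_h u_h\|_{DG}$ by $\eta_2,\eta_3$ via \eqref{estimate:enriching}, and bound $a(v,v)$ for $v = u - E_h u_h$ through the residual --- is the standard one and is the route of the proof the paper cites from \cite{tschernernumerical}. But there are two genuine problems. The first is that you never use the discrete equation \eqref{eq:discrete_problem}. ``Subtracting a Cl\'ement-type quasi-interpolant $I_h v$'' is not a free move: the identity $l(v)-a_h(u_h,v)=\bigl[l(v)-l_h(I_hv)\bigr]-a_h(u_h,v-I_hv)$ holds only because $a_h(u_h,I_hv)=l_h(I_hv)$, i.e.\ Galerkin orthogonality. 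Without it the term $\sum_T\int_T(f-\Delta^2u_h)\,I_hv$ (and its edge companions) is unaccounted for, and the theorem is in fact false for a discrete function that does not solve \eqref{eq:discrete_problem}: for such a $u_h$ the volume residual can only be controlled \emph{without} the $h_T^2$ weight. Once Galerkin orthogonality is invoked you must also track the consistency and penalty terms of $a_h(u_h,\cdot)$ evaluated at $v-I_hv$ and the mismatch $l(v)-l_h(I_hv)$; these produce further $\eta_2,\eta_3$ contributions, and making sense of $\avg{\partial_n\Delta(\cdot)}$ applied to the merely piecewise-$H^2$ function $v$ is exactly why the paper extends $a_h$ to $\cW^k$ through the lifting operator, cf.\ \eqref{eq:discrete}.

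The second problem is that your ``fix'' for the crease term fails. On $e\in\cC$ one has $\sjump{\nabla v}=\sjump{\nabla u}$, which is a fixed $O(1)$ function (the fold angle), so $\sum_{e\in\cC}h_e^{-1}\|\sjump{\nabla v}\|^2_{L^2(e)}\sim h^{-1}$ and cannot be bounded by $|v|_{H^2(\Omega_1\cup\Omega_2)}$; concretely, the scaled one-sided trace inequality gives $h_e^{-1}\|\sjump{\nabla v}\|^2_{L^2(e)}\lesssim h_e^{-2}\|\nabla v\|^2_{L^2(T_\pm)}+|v|^2_{2,T_\pm}$, and the first term is not controlled by $\|v\|_{DG}$. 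The resolution is the opposite of what you propose: pair $\avg{\partial_n\nabla u_h}$ with $\sjump{\nabla(v-I_hv)}$ (again courtesy of Galerkin orthogonality), with $I_h$ constructed independently on $\Omega_1$ and $\Omega_2$ so that $\|\nabla(v-I_hv)\|_{L^2(T)}\lesssim h_T|v|_{2,\omega_T}$ holds up to the fold; the one-sided trace inequality then yields $h_e^{-1/2}\|\sjump{\nabla(v-I_hv)}\|_{L^2(e)}\lesssim |v|_{2,\omega_{T_+}}+|v|_{2,\omega_{T_-}}$. No cancellation of $\sjump{\nabla u}$ is needed, only the local approximation property --- this is precisely the mechanism the paper uses in its \emph{medius} analysis, where the analogous term (III) in \eqref{eq2} is controlled through $\chi=\psi-E_h\psi$ and \eqref{estimate:enriching} with $\beta=1,2$.
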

\begin{proof}
	We refer to \cite[Theorem 3.3]{tschernernumerical} for the proof.
\end{proof}

\section{Efficiency of the Error Estimator}
\label{sec:efficiency}
Next, we prove discrete local efficiency estimates. The main result is as follows:
\begin{theorem} \label{mainresult:eff}
Let $u \in \mathcal{V}(g,\Phi)$ satisfy \eqref{eq:variational_form}, $v_h \in \cV_h^k$ and $\bar{f} \in \mathbb{P}_0(T)$ be a piecewise constant approximation of $f$. Then, it holds that
		\begin{align}
	\sum_{T   \in \cT_h }\|h_T^2 (f- \Delta^2 v_h)\|^2_{L^{2}(T)} & \lesssim  \sum_{T   \in \cT_h} \bigg(|u-v_h|^2_{2,T}  +  h_T^2\|f- \bar{f}\|^2_{L^2(T)} \bigg), \label{efficiency1} \\ 
	 \sum_{e \in \tilde{\Gamma}_h }\|h_e^{-\frac{3}{2}} \sjump{v_h}\|^2_{L^{2}(e)} 
	 & \lesssim  \|u-v_h\|^2_{DG}, \label{efficiency2} \\
	   \sum_{e \in \tilde{\Gamma}_h \setminus \cC}\|h_e^{-\frac{1}{2}} \sjump{ \nabla v_h}\|^2_{L^{2}(e)} &\lesssim  \|u-v_h\|^2_{DG}, \label{efficiency3} \\
	    \sum_{e \in \Gamma_h^{\rm int} } \|h_e^{\frac{1}{2}} \sjump{ \partial_n \nabla v_h}\|^2_{L^{2}(e)} 
	    & \lesssim \sum_{T   \in \cT_h} \bigg(|u-v_h|^2_{2,T}  +  h_T^2\|f- \bar{f}\|^2_{L^2(T)} \bigg), \label{efficiency4} \\
	    \sum_{e \in \cC} \|h_e^{\frac{1}{2}} \avg{ \partial_n \nabla v_h} \|^2_{L^{2}(e)}
	    & \lesssim \sum_{T   \in \cT_h} \bigg(|u-v_h|^2_{2,T}  +  h_T^2\|f- \bar{f}\|^2_{L^2(T)} \bigg), \label{efficiency5} \\
	    \sum_{e \in \Gamma_h^{\rm int}} \|h_e^{\frac{3}{2}} \sjump{ \partial_n \Delta v_h} \|^2_{L^{2}(e)} & \lesssim  \sum_{T   \in \cT_h} \bigg(|u-v_h|^2_{2,T}  +  h_T^2\|f- \bar{f}\|^2_{L^2(T)} \bigg). \label{efficiency6}
	\end{align}
\end{theorem}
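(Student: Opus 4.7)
The six bounds divide naturally into three groups by proof technique. First, bounds \eqref{efficiency2} and \eqref{efficiency3} are essentially immediate: since $u \in \mathcal{V}(g,\Phi) \subset H^1(\Omega)$, we have $\sjump{u}=0$ on every interior edge (including those on $\mathcal{C}$) and $u=g$ on $\partial_D\Omega$; likewise $\sjump{\nabla u}=0$ on edges in $\tilde{\Gamma}_h \setminus \mathcal{C}$ and $\nabla u = \Phi$ on $\partial_D\Omega$. Consequently $\sjump{v_h} = -\sjump{u-v_h}$ and $\sjump{\nabla v_h} = -\sjump{\nabla(u-v_h)}$ on the relevant edges, so both quantities appear directly inside $\|u-v_h\|_{DG}^2$ via \eqref{eq:dg_norm1}.

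Second, for the element residual \eqref{efficiency1} and the two interior jump estimators \eqref{efficiency4}, \eqref{efficiency6}, I would follow a Verf\"urth-style argument adapted to the biharmonic operator. For \eqref{efficiency1}, choose a quartic interior bubble $b_T \in H^2_0(T)$, set $v = (\bar f - \Delta^2 v_h)b_T$, extend it by zero, and note $v \in \mathcal V(0,0)$. Since $v$ and $\nabla v$ vanish on $\partial T$, two integrations by parts combined with $a(u,v) = l(v)$ give
\begin{equation*}
\int_T (\bar f - \Delta^2 v_h)v\, dx = \int_T D^2(u-v_h):D^2 v\, dx + \int_T (f-\bar f)v\, dx,
\end{equation*}
and norm equivalence on $\mathbb P_k$ together with the inverse estimate $|v|_{2,T} \lesssim h_T^{-2}\|\bar f - \Delta^2 v_h\|_{L^2(T)}$ closes the bound. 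For \eqref{efficiency4} and \eqref{efficiency6}, I would localize to the patch $\omega_e = T_+ \cup T_-$, multiply by a classical edge-bubble-weighted polynomial extension of the respective jumps, integrate by parts on $T_\pm$ separately, use $a(u,v) = l(v)$ and the continuity of $\partial_n \nabla u$ and $\partial_n \Delta u$ across interior (non-fold) edges to convert the edge norms into $|u-v_h|_{2,\omega_e}$ plus a data oscillation term.

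The main novelty is \eqref{efficiency5}, and here I would use the bubble $\phi_{\hat e,\alpha}$ constructed in Section~\ref{sec:bubble}. Fix $\hat e \in \mathcal C$ and $\alpha \in \{\hat n,\hat\tau\}$, and set $v := \zeta_\alpha\,\phi_{\hat e,\alpha} \in \mathcal V(0,0)$. By Lemma~\ref{bubble:properties}, $v$ and $\nabla v$ vanish on $\partial \mathcal T_{\hat e}\setminus \hat e$, while on $\hat e$ we have $\sjump{v}=0$, $\avg{\nabla v}=0$, and $\sjump{\nabla v} = h_{\hat e}^{-1}\zeta_\alpha\Lambda_{T,1}^4\Lambda_{T,3}^4\,\alpha$. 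Integrating by parts twice on $T_+$ and $T_-$ separately, these properties collapse all interelement boundary contributions except the $\avg{\partial_{\hat n}\nabla v_h}\cdot\sjump{\nabla v}$ term, yielding
\begin{equation*}
\int_{\hat e}\avg{\partial_{\hat n}\nabla v_h}\cdot \sjump{\nabla v}\, ds = \int_{\mathcal T_{\hat e}} D^2 v_h : D^2 v\, dx - \int_{\mathcal T_{\hat e}}\Delta^2 v_h\,v\, dx.
\end{equation*}
Subtracting $a(u,v) = l(v) = \int_{\mathcal T_{\hat e}} fv\,dx$, where the interface condition $\partial_n \nabla u|_{\Omega_i} = 0$ on $\mathcal C$ ensures no boundary terms survive on the continuous side, leaves
\begin{equation*}
h_{\hat e}^{-2}\int_{\hat e}\Lambda_{T,1}^4\Lambda_{T,3}^4\bigl(\avg{\partial_{\hat n}\nabla v_h}\cdot\alpha\bigr)^2 ds = \int_{\mathcal T_{\hat e}}D^2(v_h-u):D^2 v\, dx + \int_{\mathcal T_{\hat e}}(f-\Delta^2 v_h)\,v\, dx.
\end{equation*}
Polynomial norm equivalence on $\hat e$ bounds the unweighted $L^2(\hat e)$-norm of $\avg{\partial_{\hat n}\nabla v_h}\cdot\alpha$ by the weighted left-hand side, while Cauchy--Schwarz, standard scaling of $\zeta_\alpha$ and $\phi_{\hat e,\alpha}$, and the inverse estimate $|v|_{2,\mathcal T_{\hat e}} \lesssim h_{\hat e}^{-2}\|v\|_{L^2(\mathcal T_{\hat e})}$ bound the right-hand side by $|u-v_h|_{2,\mathcal T_{\hat e}}^2 + h_{\hat e}^2\|f-\bar f\|_{L^2(\mathcal T_{\hat e})}^2$ after splitting $f - \Delta^2 v_h = (f-\bar f) + (\bar f - \Delta^2 v_h)$ and absorbing the latter piece using \eqref{efficiency1}. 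Summing over $\alpha \in \{\hat n,\hat\tau\}$ recovers the full vector norm.

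The chief technical obstacle is engineering the test function so that \emph{every} spurious interelement boundary term vanishes when we integrate by parts on $T_+$ and $T_-$; it is precisely the zero-average-gradient property \eqref{bubbleproperties1} on $\hat e$ that kills the stray $\sjump{\partial_{\hat n}\nabla v_h}\cdot\avg{\nabla v}$ term that the DG identity would otherwise produce, while the explicit jump formula in \eqref{bubbleproperties2} extracts exactly the quantity $\avg{\partial_{\hat n}\nabla v_h}\cdot\alpha$ we wish to control. (Lemma~\ref{bubblelifting} plays the analogous role should the argument need to be recast using the lifting formulation \eqref{eq:discrete} instead of raw integration by parts.) A secondary bookkeeping subtlety is tracking the $h_{\hat e}$-scaling through the definition of $\zeta_\alpha$ and the $h_{\hat e}^{-1}$ factor in $\sjump{\nabla \phi_{\hat e,\alpha}}$, so that the final power of $h_{\hat e}$ in the efficiency estimator matches that predicted by the reliability bound in Section~\ref{sec:reliability}.
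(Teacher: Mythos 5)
Your proposal is correct and follows essentially the same route as the paper: \eqref{efficiency2}--\eqref{efficiency3} from the DG norm and the regularity of $u$, the standard Verf\"urth/Gudi bubble arguments for \eqref{efficiency1}, \eqref{efficiency4}, \eqref{efficiency6}, and for \eqref{efficiency5} the test function $\zeta_\alpha\phi_{\hat e,\alpha}$ whose properties (Lemma \ref{bubble:properties}, together with Lemma \ref{bubblelifting} in the lifted formulation) isolate exactly the term $\int_{\hat e}\avg{\partial_{\hat n}\nabla v_h}\cdot\sjump{\nabla v}\,ds$, followed by norm equivalence, inverse estimates, and absorption of the residual via \eqref{efficiency1}. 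Whether one tests with each $\alpha$ separately and sums, or with $\zeta_{\hat\tau}\phi_{\hat e,\hat\tau}+\zeta_{\hat n}\phi_{\hat e,\hat n}$ at once as the paper does, is immaterial.
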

Note that the estimates \eqref{efficiency2} and \eqref{efficiency3} follow immediately from the
definition of the DG norm \eqref{eq:dg_norm1} and the fact that $u \in H^1(\Omega) \cap H^2(\Omega_1 \cup \Omega_2)$. 
The novel estimate is \eqref{efficiency5}, and hence we focus on its proof. 
Since the estimates \eqref{efficiency1}, \eqref{efficiency4} and \eqref{efficiency6} have been previously 
shown, we provide references for the proofs but omit the details. 
\begin{proof}
The proof of \eqref{efficiency1} is similar to the proof of \cite[Lemma 4.1]{MR2684360}, while 
the proof of \eqref{efficiency4} is based on standard ``edge"  bubble function techniques, and we refer to \cite[Lemma 4.1]{MR2684360}  for the proof.
Estimate \eqref{efficiency6} can be shown by following \cite[Lemma 4.2]{MR2684360}.
To prove \eqref{efficiency5}, it is sufficient to show that
	\begin{align*}  
\|h_{\ehat }^{\frac{1}{2}} \avg{ \partial_{\hat{n}} \nabla v_h }\|^2_{L^2(\ehat )}  \lesssim \sum_{T   \in \cT_{\ehat }} \bigg( |u-v_h|^2_{2,T}  + h_T^2\|f- \bar{f}\|^2_{L^2(T)} \bigg), 
\end{align*}
%
%
%
%
%
\par
\noindent
where $v_h \in \cV_h^k$,  $\cT_{\ehat } : = T_{-}\cup T_{+}$ and $\ehat  \in \cC$ is shared by two 
neighboring triangles $T_{-},T_{+} \in \cT_h$, where $T_{-} \subset \Omega_1$ and $T_{+} \subset \Omega_2$. 
As depicted in Figure \ref{fig:rhombus}, let $\hat{n}$ denote the outer unit normal vector to $T_{+}$. 
Let $v \in \cV(0,0)$, and for $v_h \in \mathcal{V}^k_h$, define $\varepsilon := u -v_h$. 
Using regularity of $u$ and $v$ and definition of lifting operator $L(\cdot)$ (Eq.~\eqref{eq:lifting}), it holds that $L(u)=L(v)=0$. 
By \eqref{eq:discrete}, we also have that $a_h(u,v)=l(v) $. Let us consider
\begin{align*}
a_h(\varepsilon,v) &= a_h(u,v) -  a_h(v_h,v) \notag \\
& = l(v)  -a_h(v_h, v).     
\end{align*}
Using the definitions of $a_h(\cdot, \cdot)$, $l(\cdot)$, $L(\cdot)$ and elementwise integration by parts (cf.~\cite[Theorem 3.3]{tschernernumerical}), we arrive at
\begin{align}
a_h(\varepsilon,v)  &= \sum_{T \in \mathcal{T}_h} \int_T \big[(f- \Delta^2 v_h) v  - L(v_h) : D^2 v \big]dx
+  \sum_{e \in \Gamma^{\rm int}_h} \int_{e} \avg{ \nabla v} \cdot \sjump{\partial_n \nabla v_h} ds  \notag \\ 
&  \quad + \sum_{e \in \mathcal{C}} \int_{e} \avg{ \partial_n \nabla v_h} \cdot \sjump{\nabla v} ds 
- \sum_{e \in \Gamma^{\rm int}_h}  \int_{e} \avg{ v } \sjump{\partial_n \Delta v_h} ds  \notag \\ 
&  \quad -  \sum_{e \in \tilde{\Gamma}_h} \int_{e} \frac{\gamma_0}{h^3} \sjump{v_h} \sjump{v} ds 
- \sum_{e \in \tilde{\Gamma}_h \setminus \mathcal{C}} \int_{e} \frac{\gamma_1}{h} \sjump{\nabla v_h} \cdot \sjump{\nabla v} ds. \label{est2}
\end{align}
%
%
%
Next, the idea is to consider a test function $v$ in such a way that all but the first and fourth terms vanish 
on the right-hand side of \eqref{est2}. 
For $\ehat  \in \partial T_- \cap \partial T_+$ and $T \in \{T_-, T_+\}$, recall the bubble 
function $\phi_{\ehat,\alpha } : \Omega \longrightarrow \mathbb{R}$, where $\alpha \in \{\nhat, \tauhat\}$ (defined in \eqref{novelbubble}; 
see Figure \ref{fig:rhombus}). Consider 
 the test function $v = \zeta_{\tauhat} \phi_{\ehat,\tauhat }+\zeta_{\nhat} \phi_{\ehat,\nhat } \in \cV(0,0)$ (similar to Lemma \ref{bubblelifting}). 
In the following, we assume that $\avg{\partial_{\hat n} \nabla v_h}$ is nonzero on some subset of $\ehat$ of positive measure; 
otherwise, the contribution from $\ehat$ to \eqref{efficiency5} is zero. 
Using this assumption, the properties of $\phi_{\ehat,\alpha }$ (Lemma \ref{bubble:properties}), continuity of $\zeta_{\alpha}$, and 
Lemma \ref{bubblelifting}, Eq.~\eqref{est2} reduces to
\begin{align} \label{effp1}
\int_{\ehat }  \avg{ \partial_{\hat{n}} \nabla v_h} \cdot  \sjump{\nabla v} ds = \int_{\cT_{\ehat }} D^2 \varepsilon : D^2 v ~dx - \int_{\cT_{\ehat }} (f- \Delta^2 v_h) v ~dx.
\end{align}
Using the structure of $ \Lambda_{T,i}$ for $i \in \{1,3\}$ and the standard 
norm equivalence on finite dimensional space $\cV_h^k$ \cite[Lemma 4.22]{MR4793681},
we have
\begin{align}
\int_{\ehat } \avg{ \partial_{\hat{n}} \nabla v_h} \cdot  \sjump{\nabla v } ds 
&= \int_{\ehat } h_{\ehat }^{-2} \Lambda^4_{T,1}  \Lambda^4_{T,3} 
\big(\avg{ \partial_{\nhat} \partial_{\nhat} v_h}^2 +  \avg{ \partial_{\nhat} \partial_{\tauhat} v_h}^2\big)~ds \notag \\ 
& \gtrsim \|h_{\ehat }^{-1} \avg{ \partial_{\nhat} \nabla v_h} \|^2_{L^{2}(\ehat )}. \label{effp2}
\end{align}
By inserting \eqref{effp2} in \eqref{effp1}, we get
\begin{align} 
\|h_{\ehat }^{-1} \avg{ \partial_{\hat{n}} \nabla v_h} \|^2_{L^{2}(\ehat )} & \lesssim  \int_{\cT_{\ehat }} D^2 \varepsilon : D^2 v ~dx - \int_{\cT_{\ehat }} (f- \Delta^2 v_h) v ~dx. \nonumber
\end{align}
By using Cauchy Schwarz inequality and standard inverse inequality \cite[Theorem 3.2.6]{MR0520174} on $v$, we further deduce
\begin{align}
\|h_{\ehat }^{-1} \avg{ \partial_{\hat{n}} \nabla v_h} \|^2_{L^{2}(\ehat )} & \lesssim \|h_T^{\frac{1}{2}} (f- \Delta^2 v_h)\|_{L^{2}(\cT_{\ehat })} \|h_T^{-\frac{1}{2}} v \|_{L^{2}(\cT_{\ehat })} \nonumber \\ &  \qquad \qquad  \qquad +  h_T^{-\frac{3}{2}}\|D^2 \varepsilon\|_{L^{2}(\cT_{\ehat })}  h_T^{\frac{3}{2}}\|D^2 v\|_{L^{2}(\cT_{\ehat })}, \nonumber \\
\implies \|h_{\ehat }^{-1} \avg{ \partial_{\hat{n}} \nabla v_h} \|^2_{L^{2}(\ehat )}  
& \lesssim \bigg( \|h_T^{\frac{1}{2}} (f- \Delta^2 v_h)\|_{L^{2}(\cT_{\ehat })} \nonumber \\ & \qquad \qquad \qquad +  h_T^{-\frac{3}{2}}\|D^2 \varepsilon\|_{L^{2}(\cT_{\ehat })}  \bigg) \|h_T^{-\frac{1}{2}} v \|_{L^{2}(\cT_{\ehat })}. \label{effp4}
\end{align}
Using the definition of $v$ and similar ideas as in \cite[Theorem 3.2]{MR2034620}, it holds that
\begin{align} \label{effp3}
\|v\|_{L^{2}(\cT_{\ehat })} \lesssim \|h_{\ehat }^{-\frac{1}{2}} \avg{ \partial_{\hat{n}} \nabla v_h} \|_{L^{2}(\ehat )}.
\end{align}
We use \eqref{efficiency1} and insert \eqref{effp3} in \eqref{effp4} to obtain 
\begin{align*}
 \|h_{\ehat }^{\frac{1}{2}} \avg{ \partial_{\hat{n}} \nabla v_h } \|^2_{L^{2}(\ehat )}  \lesssim  \sum_{T   \in \cT_{\ehat }} \bigg( |u-v_h|^2_{2,T}  + h_T^2\|f- \bar{f}\|^2_{L^2(T)} \bigg),
\end{align*}
as desired.

\end{proof}

\section{A priori error estimates}
\label{sec:medius}
In this section, we derive improved \emph{a priori} error bounds under 
minimal regularity assumptions on the exact solution $u$ of 
\eqref{eq:variational_form}, motivated by the celebrated analysis of Gudi \cite{MR2684360}. The main result of this section is as follows:
\begin{theorem}
	Let $u \in \cV(g,\Phi)$ and $u_h \in \cV^k_h$ be the solutions 
to \eqref{eq:variational_form} and \eqref{eq:discrete_problem}, respectively. Then, the following a priori error estimate holds:
	\begin{align} \label{proof:apriori}
	\|u-u_h\|_{DG} \lesssim \bigg( \inf_{v_h \in \mathcal{V}^k_h} \|u-v_h\|_{DG} +\Big(\sum_{T   \in \cT_h} h_T^2\|f- \bar{f}\|^2_{L^2(T)} \Big)^{\frac{1}{2}} \bigg).
	\end{align}   
\end{theorem}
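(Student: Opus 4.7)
The plan is a \emph{medius}-style argument in the spirit of Gudi \cite{MR2684360}, combining coercivity of $a_h$ with the local efficiency bounds of Theorem \ref{mainresult:eff} to absorb all discrete-residual quantities into $\|u - v_h\|_{DG}$ plus oscillation. For an arbitrary $v_h \in \cV_h^k$, the triangle inequality gives $\|u - u_h\|_{DG} \le \|u - v_h\|_{DG} + \|w_h\|_{DG}$ with $w_h := v_h - u_h \in \cV_h^k$. Coercivity of $a_h$ \cite[Proposition~2.5]{MR4699572} together with the discrete equation \eqref{eq:discrete_problem} yields
\begin{equation*}
\|w_h\|_{DG}^2 \lesssim a_h(w_h, w_h) = a_h(v_h - u, w_h) + \bigl[a_h(u, w_h) - l_h(w_h)\bigr],
\end{equation*}
and boundedness of $a_h$ handles the first summand. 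The whole task reduces to bounding the consistency defect $R(w_h) := a_h(u, w_h) - l_h(w_h)$.

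To attack $R(w_h)$, set $\chi := E_h w_h$, using a homogeneous variant of the enriching operator of Subsection \ref{subsec:enriching} (obtained by replacing the boundary data $g, \Phi$ by zero), so that $\chi \in \widetilde{\cV}_h^{k+2} \cap \cV(0, 0)$ and the approximation bound \eqref{estimate:enriching} still holds. Since $\chi$ is globally $C^1$ with homogeneous Dirichlet data and $u$ is $H^1$-conforming with $u = g$, $\nabla u = \Phi$ on $\partial_D \Omega$, every jump/average term in $a_h(u, \chi)$ vanishes -- in particular on $\cC$, because $\chi$ is $C^1$ across the fold even though $u$ need not be -- so $a_h(u, \chi) = a(u, \chi) = l(\chi)$ from \eqref{eq:variational_form}. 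After absorbing the Dirichlet data into the modified boundary jumps as in Section \ref{sec:reliability}, this yields
\begin{equation*}
R(w_h) = a_h(u - v_h, w_h - \chi) + a_h(v_h, w_h - \chi) + \int_{\Omega} f(\chi - w_h)\,dx.
\end{equation*}
The first summand is bounded by $\|u - v_h\|_{DG} \|w_h\|_{DG}$ via the stability estimate \eqref{estimate:stabilityenriching} applied to $\chi$.

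For the remaining two pieces I integrate by parts elementwise inside $a_h(v_h, w_h - \chi)$; since $v_h$ is piecewise polynomial, $\Delta^2 v_h$ is well-defined, and after combining with the edge consistency and penalty terms of $a_h$ (which simplify considerably because $\chi$ is $C^1$), the volume integral pairs with $\int f(\chi - w_h)\,dx$ in such a way that the piecewise-constant component $\bar f$ cancels exactly, producing
\begin{equation*}
\sum_{T \in \cT_h} \int_T (\Delta^2 v_h - f)(w_h - \chi)\,dx + (\text{edge terms in } v_h).
\end{equation*}
The edge terms involve precisely $\sjump{\partial_n \nabla v_h}$, $\avg{\partial_n \nabla v_h}\vert_{\cC}$, and $\sjump{\partial_n \Delta v_h}$ paired against $(w_h - \chi)$ or $\nabla(w_h - \chi)$; after Cauchy--Schwarz, the approximation property \eqref{estimate:enriching} for $\beta \in \{0, 1, 2\}$ converts the $L^2$ factors of $w_h - \chi$ into $\|w_h\|_{DG}$ with the correct scaling, while the remaining coefficient factors are exactly the estimators $\eta_1(v_h)$, $\eta_4(v_h)$, $\eta_5(v_h)$, $\eta_6(v_h)$ and the data oscillation. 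The efficiency bounds \eqref{efficiency1}, \eqref{efficiency4}, \eqref{efficiency5}, \eqref{efficiency6} then replace each estimator by $\|u - v_h\|_{DG}$ plus oscillation, Young's inequality absorbs the remaining $\|w_h\|_{DG}$ on the left, and taking an infimum over $v_h$ delivers \eqref{proof:apriori}. The main technical hurdles are (i) constructing the homogeneous variant of the enriching operator while preserving \eqref{estimate:enriching}, and (ii) verifying that all interface contributions along $\cC$ collapse to $\eta_5$ alone (with no surviving jump of $\nabla v_h$ across $\cC$), which hinges on $\chi$ being $C^1$ across $\cC$ while $u$ is merely $H^1$ there.
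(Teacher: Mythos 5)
Your proposal is correct and follows essentially the same route as the paper: coercivity plus the enriching operator $E_h$, elementwise integration by parts of $a_h(v_h,\cdot)$ against $w_h - E_h w_h$, term-by-term bounds by the estimators of Theorem \ref{mainresult:eff} times $\|w_h\|_{DG}$, and then the local efficiency bounds; the paper merely groups the two $a_h(u-v_h,\cdot)$ contributions into a single term $a_h(u-v_h,E_h\psi)$ and divides by $\|\psi\|_{DG}$ instead of using Young's inequality. Two small points to make explicit: the quantity $a_h(u,w_h)$ for nonconforming $w_h$ is only meaningful through the lifting-operator form \eqref{eq:discrete} of $a_h$ extended to $\mathcal{W}^k$ (otherwise $\avg{\partial_n\nabla u}$ on edges is undefined at minimal regularity), and the surviving penalty terms produce the estimators $\eta_2(v_h)$ and $\eta_3(v_h)$, which must also be absorbed via \eqref{efficiency2}--\eqref{efficiency3}.
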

\begin{proof}
	Let $v_h \in \mathcal{V}^k_h$ with $v_h \neq u_h$. 
By the triangle inequality and coercivity of $a_h(\cdot,\cdot)$, we can derive the standard estimate
\begin{equation}\label{eq:starting_point}
\| u- u_h\|_{DG} \lesssim \| u - v_h\|_{DG} 
+ \sup_{w_h \in \cV_h^k\setminus\{0\}} \frac{a_h(u_h-v_h, w_h)}{\| w_h \|_{DG}}.
\end{equation}
We focus on bounding the numerator in the second term on the right-hand side of \eqref{eq:starting_point}. 
	Using the definition of the enriching operator $E_h$ (cf.\ Subsection \ref{subsec:enriching}), 
we have that for any $\psi \in \cV_h^k$, $a(u, E_h \psi)= a_h(u, E_h \psi )$. 
Taking $\psi:= u_h-v_h \in \mathcal{V}^k_h$ and using both the variational form \eqref{eq:variational_form} 
and stability of $E_h$ (Eq.~\eqref{estimate:stabilityenriching}), it holds that
	\begin{align}
	a_h (u_h - v_h, \psi ) 
	& = l_h(\psi) - l(E_h \psi) + a(u,E_h \psi)- a_h ( v_h, \psi ) \notag \\ 
	& =  a_h(u-v_h, E_h \psi) + l_h(\psi) - l(E_h \psi) - a_h(v_h, \psi - E_h \psi ) \notag \\ 
	& \lesssim \|u-v_h\|_{DG} \|E_h \psi\|_{DG} +  l_h(\psi) - l(E_h \psi) - a_h(v_h, \psi - E_h \psi )  \notag \\ 
	& \lesssim \|u-v_h\|_{DG} \|\psi\|_{DG} +  l_h(\psi) - l(E_h \psi) - a_h(v_h, \psi - E_h \psi ). \label{eq10}
	\end{align}
Setting $\chi:= \psi - E_h \psi$ and using \cite[Eq.~(3.51)]{tschernernumerical}, it holds that
	\begin{align} \label{eq2}
	l_h&(\psi) - l(E_h \psi)  -a_h (v_h, \chi) 
	=  \underbrace{\sum_{T \in \mathcal{T}_h} \int_T \big[(f- \Delta^2 v_h) \chi - L(v_h) : D^2 \chi \big]dx}_{\rm (I):= }  \notag \\ 
	& \quad  -   \underbrace{\sum_{e \in \Gamma^{\rm int}_h}  \int_{e} \avg{ \chi} \sjump{\partial_n \Delta v_h} ds}_{\rm (II) :=}   
+  \underbrace{\sum_{e \in \cC} \int_{e} \sjump{\nabla \chi} \cdot \avg{\partial_n \nabla v_h} ds}_{\rm (III):= }  \\  
	& \hspace*{-0.2cm} +  \underbrace{\sum_{e \in \Gamma^{\rm int}_h} \int_{e}\avg{\nabla \chi} \cdot \sjump{\partial_n \nabla v_h} ds}_{\rm (IV) := }   
-  \underbrace{\sum_{e \in \tilde{\Gamma}_h} \int_{e} \frac{\gamma_0}{h^3} \sjump{v_h} \sjump{\chi} ds}_{\rm (V):=}   \notag
\quad -  \underbrace{\sum_{e \in \tilde{\Gamma}_h \setminus \cC} \int_{e} \frac{\gamma_1}{h} \sjump{\nabla v_h} \cdot \sjump{\nabla \chi} ds}_{\rm (VI):= }. 
	\end{align}
The idea from here is to bound each term on the right-hand side of \eqref{eq2} 
by $\|\psi\|_{DG}$ times one of the discrete local estimators defined in Theorem \ref{mainresult:eff}. 

Using the stability of the lifting operator $L$ (Eq.~\eqref{stabilityL1}), 
properties of $E_h$ (Lemma \ref{estimate:enriching}), and the Cauchy Schwarz inequality, 
we have
	\begin{align}
 \textrm{(I)}
	\lesssim \bigg( \sum_{T \in \mathcal{T}_h}  \|h_T^2 &(f - \Delta^2 v_h)\|^2_{L^2(T)} \notag \\ 
	& +   \sum_{e \in \tilde{\Gamma}_h}  \big \|\sqrt{\frac{\gamma_0}{h_e^3}} \sjump{v_h}\big \|^2_{L^2(e)}  + \sum_{e \in \tilde{\Gamma}_h \setminus \cC} \big \|\sqrt{\frac{\gamma_1}{h_e}} \sjump{ \nabla v_h} \big \|^2_{L^2(e)} \bigg)^{\frac{1}{2}} \|\psi\|_{DG}. \label{eq3}
	\end{align} 
	
Next, by the Cauchy Schwarz inequality, 
	\begin{align}
        \textrm{(II)}
	& \lesssim  \bigg ( \sum_{e \in \Gamma^{\rm int}_h} \|h_e^{-\frac{3}{2}} \avg{ \chi}\|^2_{L^2(e)} \bigg)^{\frac{1}{2}}  
\bigg ( \sum_{e \in \Gamma^{\rm int}_h} \|h_e^{\frac{3}{2}} \sjump{\partial_n \Delta v_h}  \|^2_{L^2(e)}\bigg)^{\frac{1}{2}}.   \label{eq6} 
	\end{align}
Using the trace theorem with scaling \cite[Eq.~2.8]{MR2034620} and 
Lemma \ref{estimate:enriching} (with both $\beta = 0$ and $\beta=1$), we bound the first term on right-hand side of \eqref{eq6} as 
	\begin{align}
	\sum_{e \in \Gamma^{\rm int}_h} \|h_e^{-\frac{3}{2}} \avg{ \chi} \|^2_{L^2(e)} 
	&  \lesssim \sum_{T   \in \cT_h} h^{-3}_T \|\chi\|^2_{\partial T} \notag \\ 
	& \lesssim \sum_{T   \in \cT_h} h^{-3}_T \big(h^{-1}_T \|\chi\|_{L^2(T)}^2 + h_T |\chi|^2_{1,T} \big) \lesssim \| \psi\|^2_{DG}. \label{eq5} 
	\end{align}
	Combining \eqref{eq5} with \eqref{eq6} implies
	\begin{align}
	-  \sum_{e \in \Gamma^{\rm int}_h}  \int_{e} \avg{ \chi} \sjump{\partial_n \Delta v_h} ds \lesssim \bigg (\sum_{e \in \Gamma^{\rm int}_h} \|h_e^{\frac{3}{2}} \sjump{\partial_n \Delta v_h}  \|_{L^2(e)}\bigg)^{\frac{1}{2}} \|\psi\|_{DG}.  \label{eq4} 
	\end{align}
The remaining terms on the right-hand side of \eqref{eq2} can be bounded with nearly identical arguments; for example, 
Lemma \ref{estimate:enriching} with $\beta = 1$ and $\beta=2$ will be used for (III), (IV), and (VI), 
all of which involve jumps and averages of $\chi$ and $\nabla \chi$. 
	In particular, we have:
	\begin{align}
\textrm{(III)} &
\lesssim \bigg ( \sum_{e \in \cC}\|h_e^{\frac{1}{2}} \avg{ \partial_n \nabla v_h } \|^2_{L^{2}(e)}   \bigg)^{\frac{1}{2}}  \|\psi\|_{DG}   \label{eq7}   \\ 
\textrm{(IV)} &
\lesssim \bigg ( \sum_{e \in \Gamma^{\rm int}_h} \|h_e^{\frac{1}{2}} \sjump{ \partial_n \nabla v_h}\|^2_{L^{2}(e)}  \bigg)^{\frac{1}{2}} \|\psi\|_{DG}  \label{eq8}  \\ 
\textrm{(V)} + \textrm{(VI)} 
&  \lesssim \bigg (   \sum_{e \in \tilde{\Gamma}_h} \|h_e^{-\frac{3}{2}} \sjump{v_h}\|^2_{L^{2}(e)}
 +  \sum_{e \in \tilde{\Gamma}_h \setminus \cC }\|h_e^{-\frac{1}{2}} \sjump{ \nabla v_h}\|^2_{L^{2}(e)}   \bigg)^{\frac{1}{2}} \|\psi\|_{DG}. \label{eq9} 
	\end{align}
By combining \eqref{eq10} with \eqref{eq2}, \eqref{eq3}, \eqref{eq4}, \eqref{eq7}, \eqref{eq8} and \eqref{eq9}, we arrive at  
	\begin{align} \label{eq:final_point}
		a_h (u_h - v_h, \psi ) &  \lesssim \bigg( \|u-v_h\|_{DG}  + \Big[ \sum_{T \in \mathcal{T}_h}  \|h_T^2 (f - \Delta^2 v_h)\|^2_{L^2(T)} \nonumber 
		\\ & \quad  +  \sum_{e \in \tilde{\Gamma}_h}  \big \|\sqrt{\frac{\gamma_0}{h_e^3}} \sjump{v_h}\big \|^2_{L^2(e)}  + \sum_{e \in \tilde{\Gamma}_h \setminus \cC} \big \|\sqrt{\frac{\gamma_1}{h_e}} \sjump{ \nabla v_h} \big \|^2_{L^2(e)} \nonumber  
		 \\ & \quad +  \sum_{e \in \Gamma^{\rm int}_h} \|h_e^{\frac{1}{2}} \sjump{ \partial_n \nabla v_h}\|^2_{L^{2}(e)} + \sum_{e \in \cC}\|h_e^{\frac{1}{2}} \avg{ \partial_n \nabla v_h } \|^2_{L^{2}(e)}  \nonumber 
		  \\ & \quad + \sum_{e \in \Gamma^{\rm int}_h} \|h_e^{\frac{3}{2}} \sjump{\partial_n \Delta v_h}  \|_{L^2(e)} \Big]^{\frac{1}{2}} \bigg) \|\psi\|_{DG}.
	\end{align}
	Substituting \eqref{eq:final_point} in \eqref{eq:starting_point} and employing the 
discrete local efficiency estimates from Theorem \ref{mainresult:eff} and yields the desired estimate \eqref{proof:apriori}.
\end{proof}

\section{Numerical Examples}
\label{sec:numerical_examples}
We now illustrate the performance of the estimators described above with
an adaptive algorithm implemented with the \texttt{deal.II} finite element library \cite{arndt2022deal}. 
The library's tutorial \cite{zhang_2021_5812174} on the 
application of a symmetric interior penalty (SIP)DG method to a Poisson problem 
 served as starting point for the implementation here. 
Note that \texttt{deal.II} uses quadrilateral mesh elements, in contrast to the 
setting for the theoretical results presented above. The visualizations are obtained
with \texttt{VisIt} \cite{Childs_High_Performance_Visualization--Enabling_2012}. 

For all simulation cases, we use 
second order elements $\cV_h^2$ (Eq.~\eqref{eq:dg_fe_space}) 
and solve the discrete linear systems with a sparse
direct solver. The library currently only supports derivatives of 
finite element basis functions up to the third order, meaning we cannot report results
for $\eta_1$.\footnote{Note that the biharmonic operator applied to a Q2 finite element
function is not generally equal to zero.} Accordingly, we redefine for the results below: 
\begin{equation}\label{eq:eta_tot_redefine}
\eta_{\rm tot} := \left( \eta_2^2 + \eta_3^2 + \eta_4^2 + \eta_5^2 + \eta_6^2\right)^{1/2}, 
\end{equation}
where each $\eta_i$ is defined by \eqref{eq:eta_defns}.  
The adaptive refinement is implemented using a standard ``solve-estimate-mark-refine'' loop
with refinement fraction $\theta = 0.1$.  

We present three numerical examples. The first two feature (piecewise) linear folds, 
which means that the fitted geometry assumption ($\cC = \cC_h$) is satisfied; the
third example features a sinusoidal fold, for which the assumption is violated. 
As discussed in Section \ref{sec:introduction}, in theory, this leads to a so-called geometric
consistency error \cite[Theorem 4.3]{MR4699572}. 
 Consistent with the results reported in \cite{MR4699572,tschernernumerical}, 
however, the error is not detected in our simulations. 

For the first numerical example, an analytic solution to the folding model \eqref{eq:variational_form}
is available, allowing us to calculate the simulation error in the DG norm \eqref{eq:dg_norm1}. 
In the latter two examples, however, no such analytic solution is known. 
In an analogous situation, 
the authors in \cite{MR4699572,tschernernumerical} approximated 
the simulation error using Aitken extrapolation and Galerkin orthogonality.
Galerkin orthogonality, however, requires that the true solution $u$ is  $H^4(\Omega_1 \cup \Omega_2) \cap H^1(\Omega)$ regular, 
which is not expected to hold for the 
second and third numerical examples presented below, owing to the combination of the domain $\Omega$, the fold $\cC$, and 
the prescribed Dirichlet boundary conditions $g$ and $\Phi$. 
Hence, for these cases we only report the individual and total estimators for  
uniform and adaptive mesh refinement. 

Finally, we mention that for our experiments, 
the penalty parameter values $\gamma_0 = \gamma_1 = 10$
that were reported in 
\cite{MR2755946,MR4699572,tschernernumerical}
were not sufficiently large to ensure consistent convergence results throughout 
our adaptive mesh cycles. 
%
In practice, starting from $\gamma_0 = \gamma_1 = 10$, 
we determine the penalty parameter values by increasing them both
by an integer factor until the results were stable for every mesh refinement. 
In particular, we use $\gamma_0 = \gamma_1 = 30$ for the first numerical example, 
$\gamma_0 = \gamma_1 = 70$ for the second example, and 
$\gamma_0 = \gamma_1 = 50$ for the third example. 

\subsection{Example 1: Flat fold}
First consider $\Omega = (0,1)^2$ and a constant fold parameterized as 
$\cC(x_2) = (1/2,  x_2)$, where $x_2 \in(0,1)$. 
In this case, one can construct a one-dimensional solution to the continuous folding 
problem \eqref{eq:variational_form} as 
$$
 u(x_1,x_2) = \begin{cases} 
0, \qquad x_1 \in [0,1/2)  \\
\big[\frac12 (x_1-1/2)^3 - (x_1-1/2)^2 + (x_1-1/2)\big] e^{x_1-1/2}, \qquad x_1 \in [1/2,1],
\end{cases}
$$
which is $C^{\infty}(\Omega_1 \cup \Omega_2)$ and 
satisfies the interface conditions \eqref{eq:interface_conditions}. 
Using the method of manufactured solutions, we then set
$f(x_1,x_2) := \Delta^2 u(x_1,x_2)$ for $(x_1,x_2) \in \Omega$. We take Dirichlet 
boundary conditions everywhere (so that $\partial_D \Omega = \partial \Omega$) 
with $g = u\vert_{\partial \Omega}$, and 
$\Phi = \nabla g$.    
We plot the adaptive mesh at level 12 and 24 in Figure \ref{fig:example1_visuals}, (left and middle, respectively) 
as well as the numerical solution (right). 
We observe the strongest refinement near the nonzero Dirichlet boundaries; in this case, 
the analytic solution in the neighborhood of the fold at $x_1 = 1/2$ is either exactly linear
(for $x_1 < 1/2$) or linear to an extremely good approximation (for $x_1 > 1/2$). Hence, 
the estimator $\eta_5$ (cf.\ Eq.~\eqref{eq:eta_defns}) that determines refinement near 
the fold $\cC$ is relatively small (cf.~Figure \ref{fig:example1_estimators}, right), explaining the lack 
of strong refinement near $\cC$. 

The convergence behavior of the error $\|u-u_h\|_{DG}$ and 
the estimator $\eta_{\rm tot}$ is depicted in Figure \ref{fig:example1_estimators} (left), illustrating 
the optimal convergence rate with respect to the degrees of freedom. This figure also confirms
the reliability of the error estimator. The average efficiency index (defined to be the ratio between 
the estimator and the DG norm error) across refinement levels is approximately 2.4, which particularly validates our findings in Theorem \ref{mainresult:eff}.
The convergence plot for each individual error 
estimator $\eta_i$, $2 \leq i \leq 6$, is also depicted in Figure \ref{fig:example1_estimators} (right). 

\begin{figure}
   \begin{center}
       \includegraphics[width=0.30\textwidth]{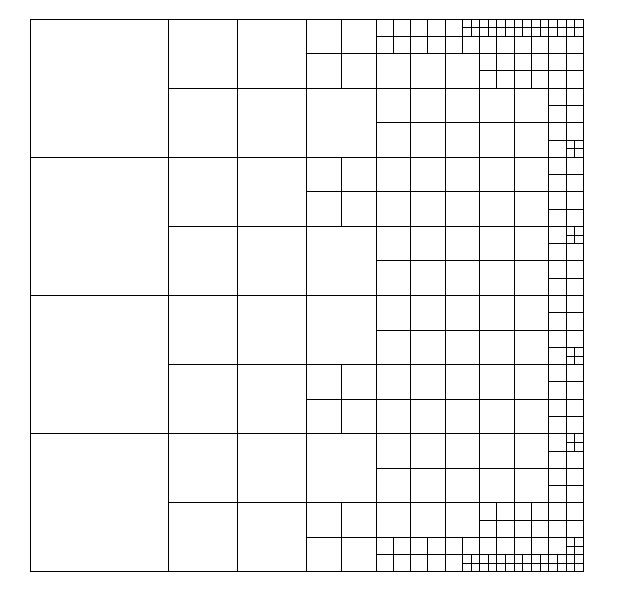}
\hskip-0.75em
       \includegraphics[width=0.30\textwidth]{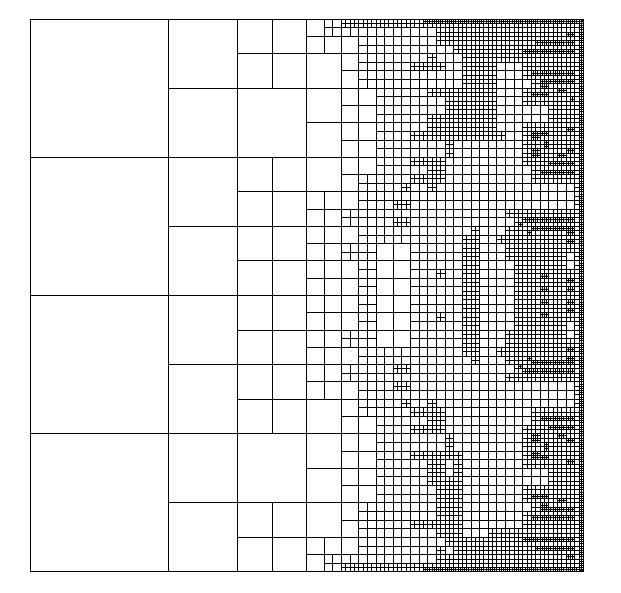}
       \includegraphics[width=0.36\textwidth]{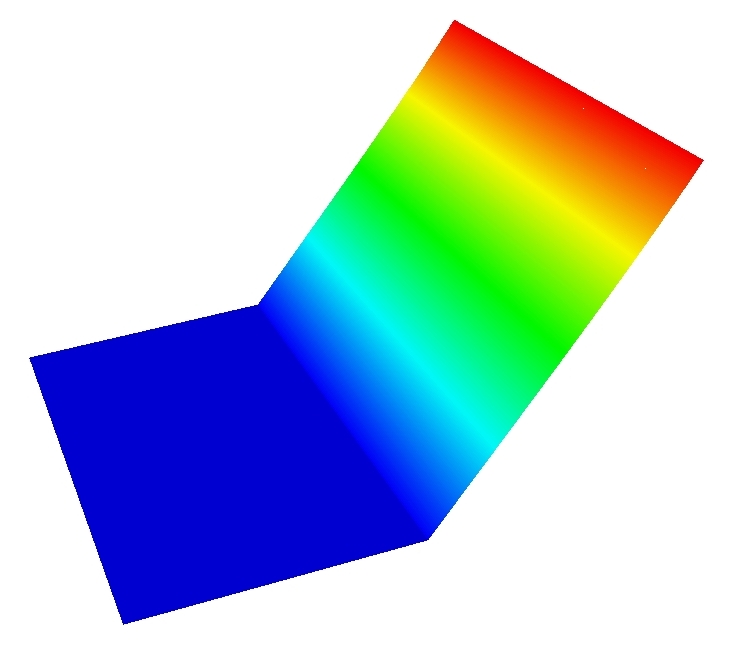} 
      \hskip-2em \includegraphics[width=0.08\textwidth]{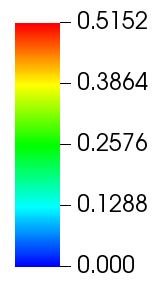} 
   \end{center}
\caption{
Adaptively refined mesh at level 12 and 24 (left and middle, respectively) and folding pattern (right) for Example 1.}
\label{fig:example1_visuals}
\end{figure}
\begin{figure}
   \begin{center}
       \includegraphics[width=0.49\textwidth]{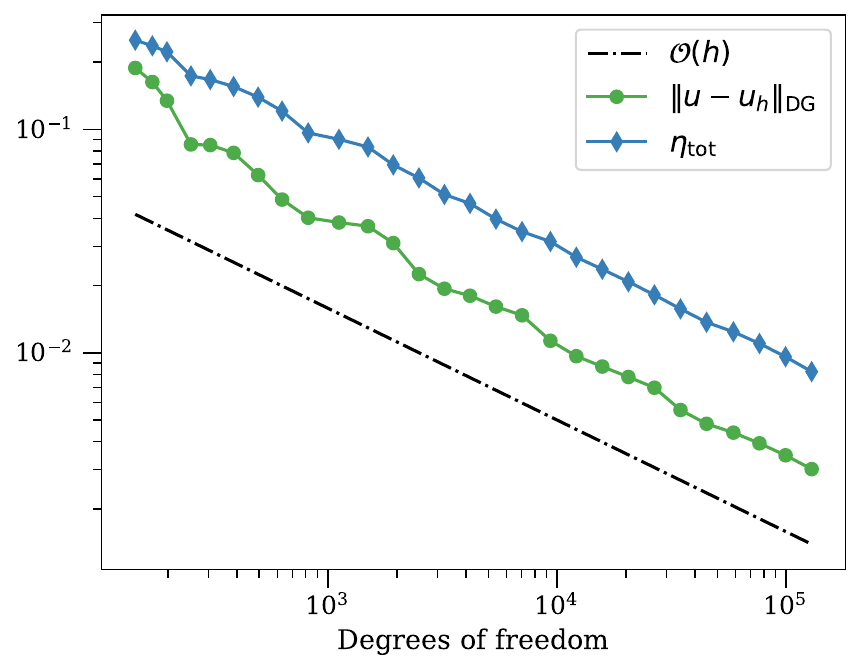}
       \includegraphics[width=0.49\textwidth]{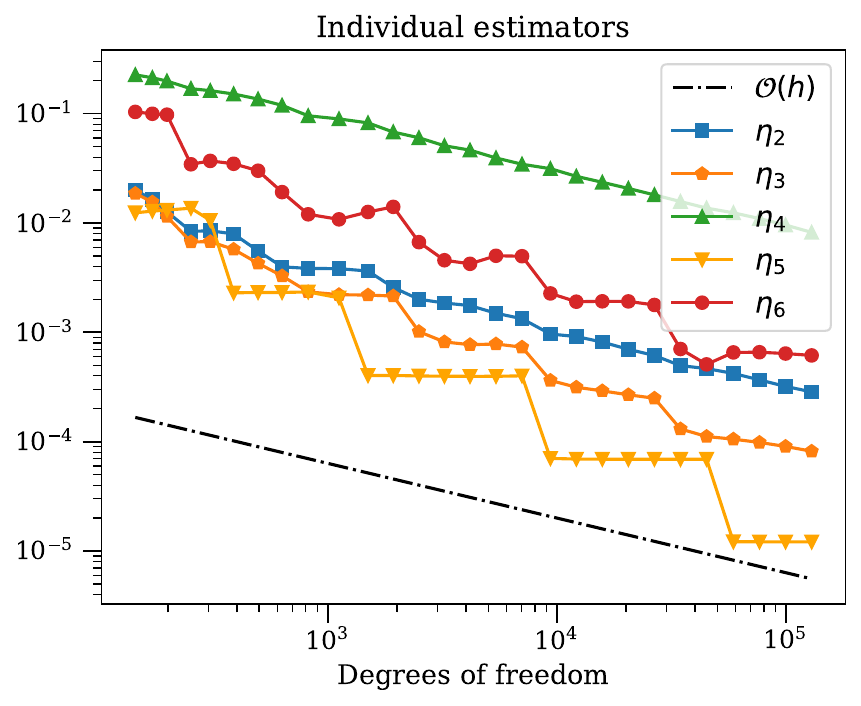}
   \end{center}
\caption{
Example 1: error in the DG norm \eqref{eq:dg_norm1} and 
the total error estimator \eqref{eq:eta_tot_redefine} (left);  individual 
estimators, as defined by \eqref{eq:eta_defns} (right). 
}
\label{fig:example1_estimators}
\end{figure}

\subsection{Example 2: Flapping mechanism with ``V-shaped'' fold}
Consider next $\Omega = (0,1)^2$ and the piecewise linear, ``V-shaped'' fold parameterized as 
$\cC(x_1) = (x_1,  \frac12 ( 1 + |x_1- \frac12 | ))$, where $ x_1 \in(0,1)$. In this case, we
consider mixed (i.e.\ both Dirichlet and natural) boundary conditions intended to mimic 
the ``flapping'' mechanism that can occur when two corners of a prepared elastic sheet are 
compressed (cf.\ \cite[Section 5.2]{bartels2022modeling}). In particular, 
we consider the Dirichlet data $g(x_1,x_2) = 0.35\sin(\pi x_1)$ and $\Phi = \nabla g$ at the boundary $x_2=1$. 
At the point $(x_1,x_2) = (\frac12,0)$, we prescribe $u = 1$; at all other boundary points, 
we consider natural boundary conditions \eqref{eq:natty_bcs}. The forcing function $f(x_1,x_2) = 0$.
We note that the exact solution $u$ is not known in this example. 
Two adaptively refined meshes (left and middle) are shown in Figure \ref{fig:example2_visuals}, 
as is the corresponding folding pattern (right). Notice that the refinement concentrates around the low regularity regions, namely 
near the ``tip'' of the crease $(x_1,x_2)=(\frac12, \frac12)$, 
as well as near the point $(x_1,x_2)=(\frac12, 0)$, where the solution ``pinned''.

The convergence behaviour of the error estimator $\eta_{\rm tot}$ 
is shown in Figure \ref{fig:example2_estimators} (left) for both adaptive 
and uniform mesh refinement;   
the convergence rates in the former case are nearly optimal, while 
those in the latter are suboptimal. The convergence plot for 
each individual estimator $\eta_i$, $2 \le i \le 6$, is also 
depicted in Figure \ref{fig:example2_estimators} (right).  
\begin{figure}
   \begin{center}
       \includegraphics[width=0.28\textwidth]{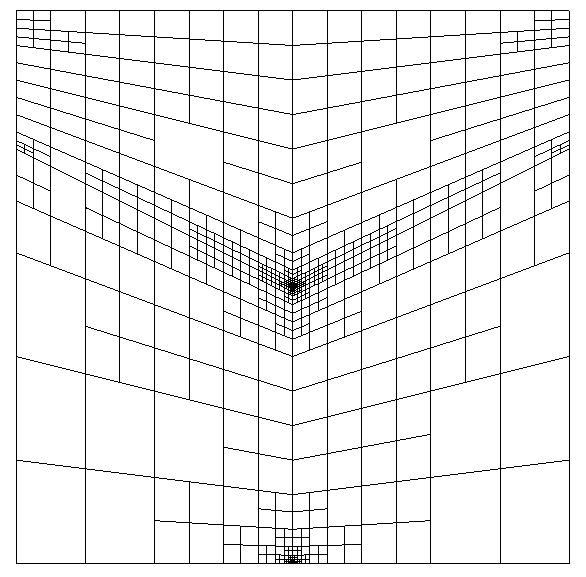}
\hskip-0.25em
       \includegraphics[width=0.28\textwidth]{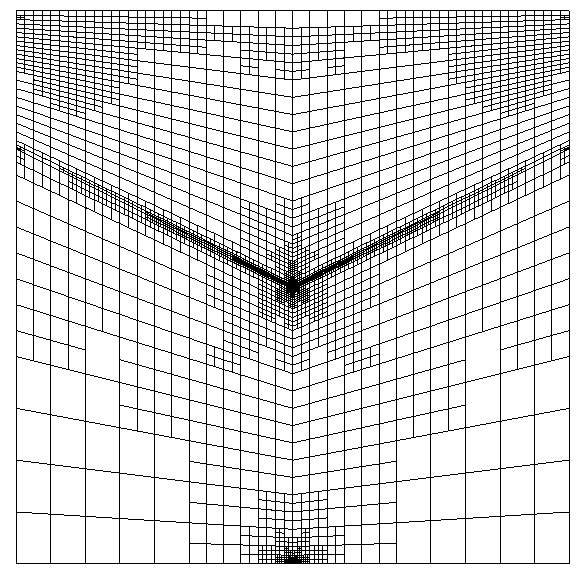}
       \includegraphics[width=0.34\textwidth]{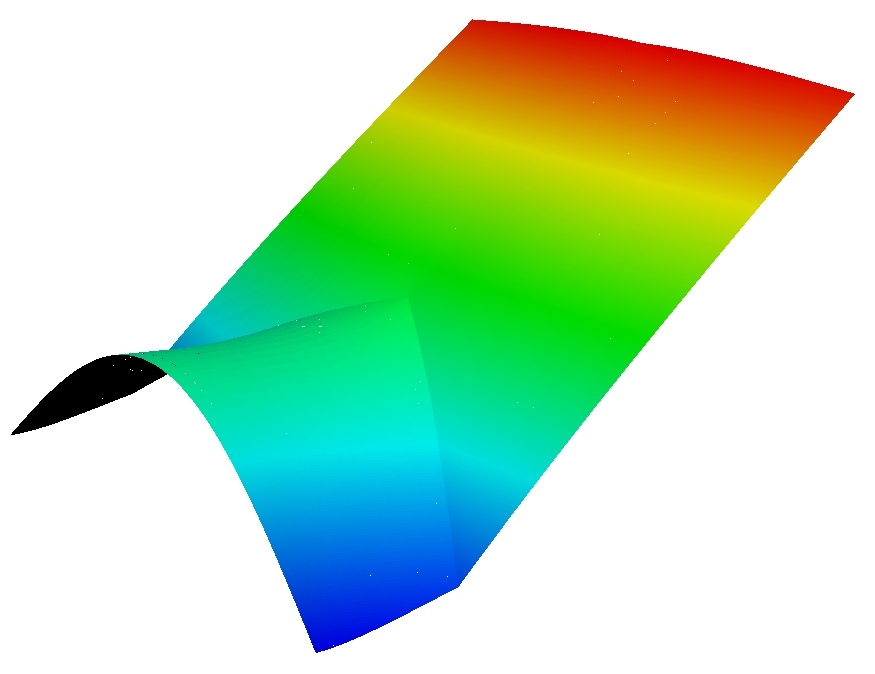}
       \includegraphics[width=0.08\textwidth]{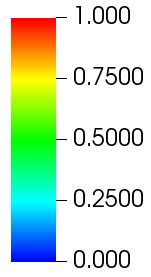} 
   \end{center}
\caption{
Adaptively refined mesh at level 10 (left) and 17 (middle), as well as the folding pattern (right) for a 
``V-shaped'' fold. 
}
\label{fig:example2_visuals}
\end{figure}
\begin{figure}
   \begin{center}
       \includegraphics[width=0.49\textwidth]{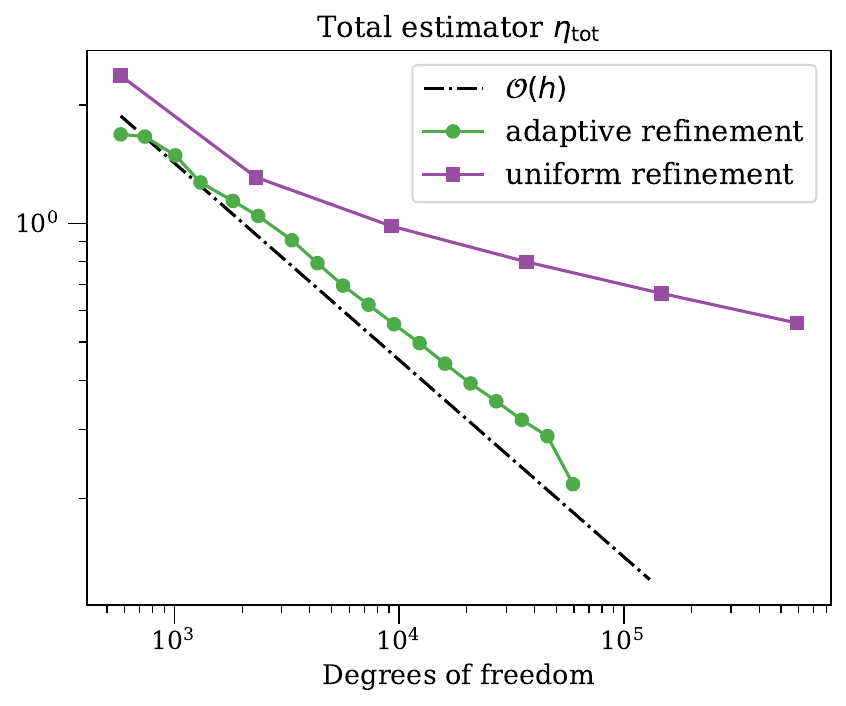}
       \includegraphics[width=0.49\textwidth]{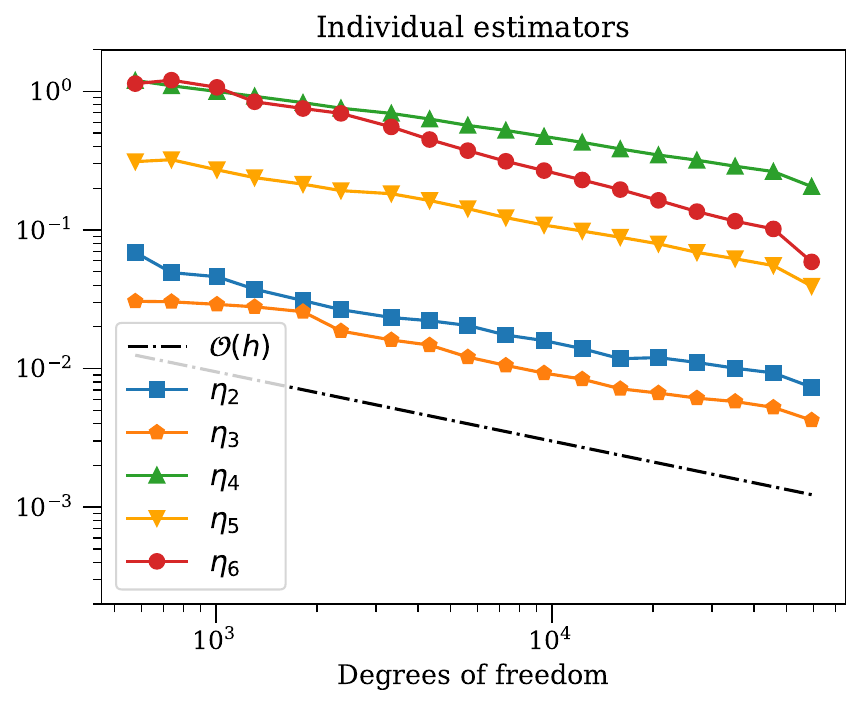}
   \end{center}
\caption{
Example 2: total estimator \eqref{eq:eta_tot_redefine} for the cases of adaptive and uniform refinement (left) and individual 
estimators, as defined by \eqref{eq:eta_defns} (right). 
}
\label{fig:example2_estimators}
\end{figure}

\subsection{Example 3: L-shaped domain and sinusoidal fold}
Lastly, consider the L-shaped domain $\Omega = (-1,1)^2 \setminus (0,1)^2$ and the sinusoidal fold 
parameterized as 
$\cC(x_1) = (x_1 , \frac16 \sin(\pi (x_1+1)) -0.5)$, 
where $x_1 \in (-1,1)$. 
We consider the inhomogeneous Dirichlet conditions 
$$
g(x,y) = \frac16(x^2 + y^2 + 2xy - x -y ) \quad \text{and} \quad \Phi(x,y) = \nabla g(x,y)
$$
for the entire boundary $\partial\Omega$
and  set $f(x_1,x_2) = 0$. 
The computational meshes at level 14 and 27 (left and middle), as well as 
the resulting folding pattern (right) are shown in 
Figure \ref{fig:example3_visuals}. 
In this case, the adaptive refinement leads to highly refined 
regions near both the fold $\cC$ and the corner singularity. 
As in the previous example, the analytic solution $u$ is not known. 
The nearly optimal convergence behaviour of the error estimator $\eta_{\rm tot}$ 
for adaptive refinement is shown in 
Figure \ref{fig:example3_estimators} (left); as expected, the rates are 
suboptimal for uniform refinement. 
The convergence plot for 
each individual estimator $\eta_i$, $2 \le i \le 6$, is also 
depicted in Figure \ref{fig:example3_estimators} (right).  
\begin{figure}
   \begin{center}
       \includegraphics[width=0.29\textwidth]{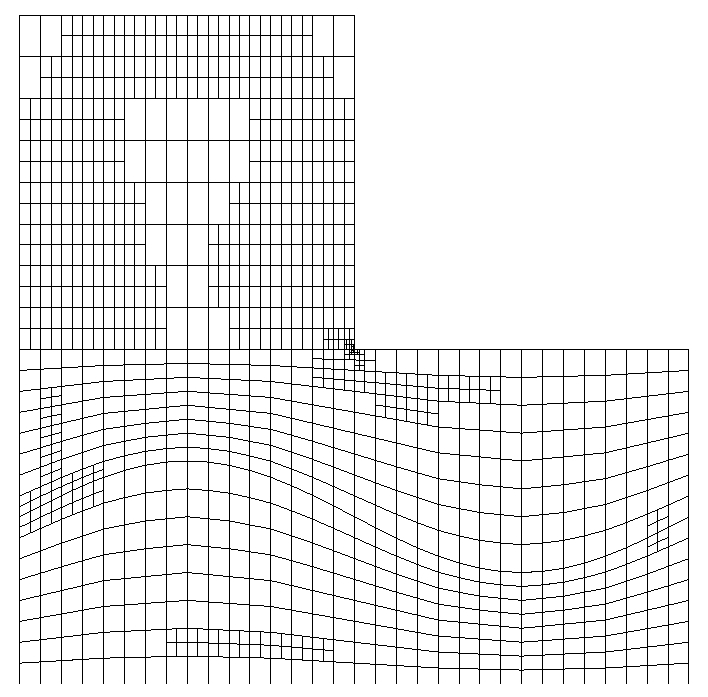}
\hskip-0.5em
       \includegraphics[width=0.29\textwidth]{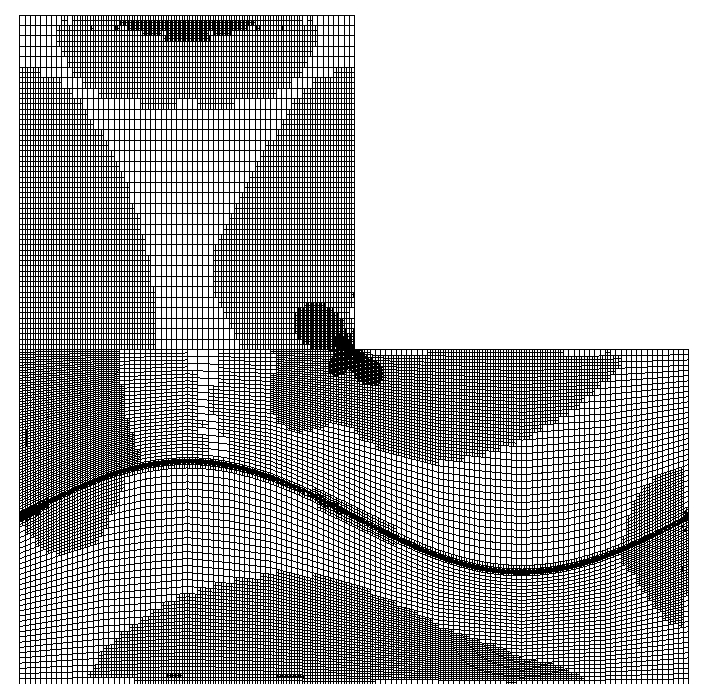}
       \includegraphics[width=0.32\textwidth]{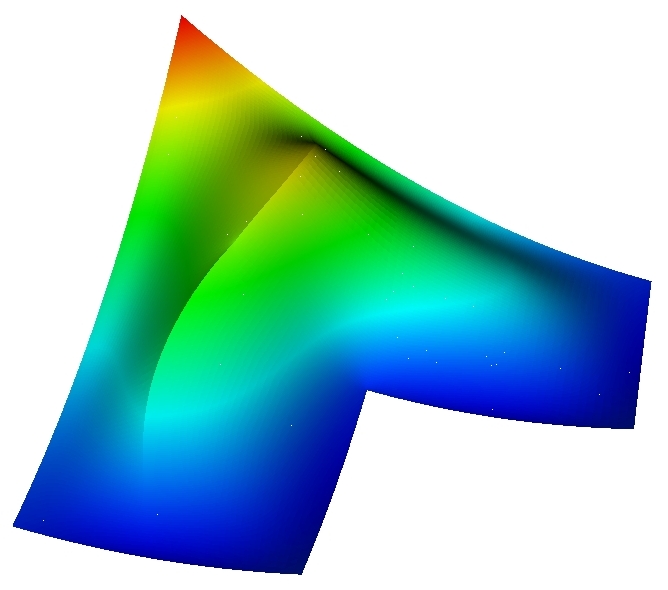}
       \includegraphics[width=0.08\textwidth]{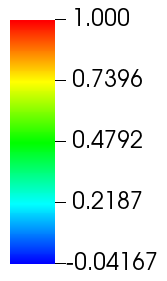} 
   \end{center}
\caption{
Adaptively refined mesh at level 14 (left) and 27 (middle), as well as the 
folding pattern (right) for an L-shaped domain 
and sinusoidal fold. 
}
\label{fig:example3_visuals}
\end{figure}
\begin{figure}
   \begin{center}
       \includegraphics[width=0.49\textwidth]{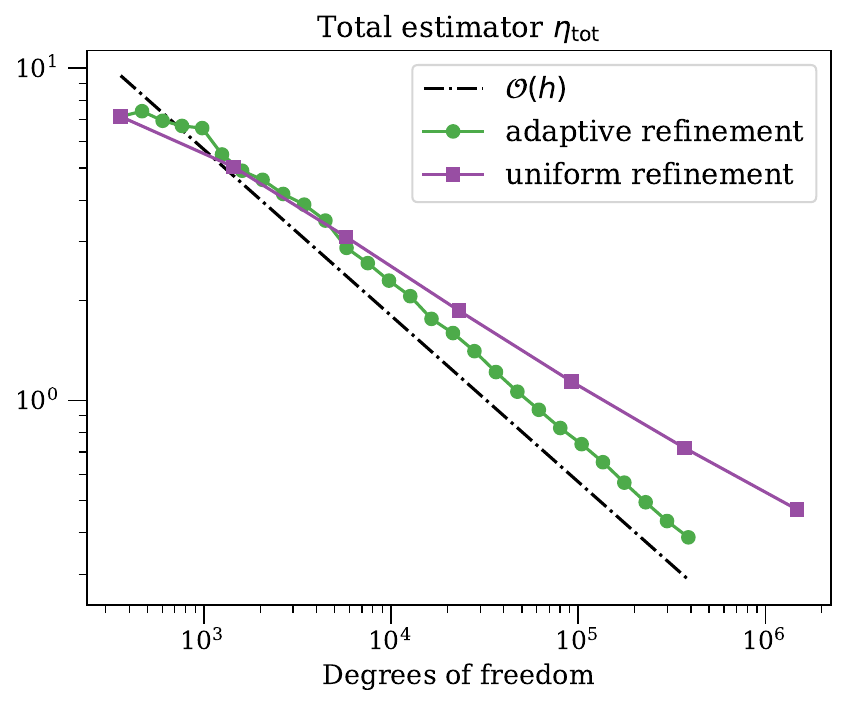}
       \includegraphics[width=0.49\textwidth]{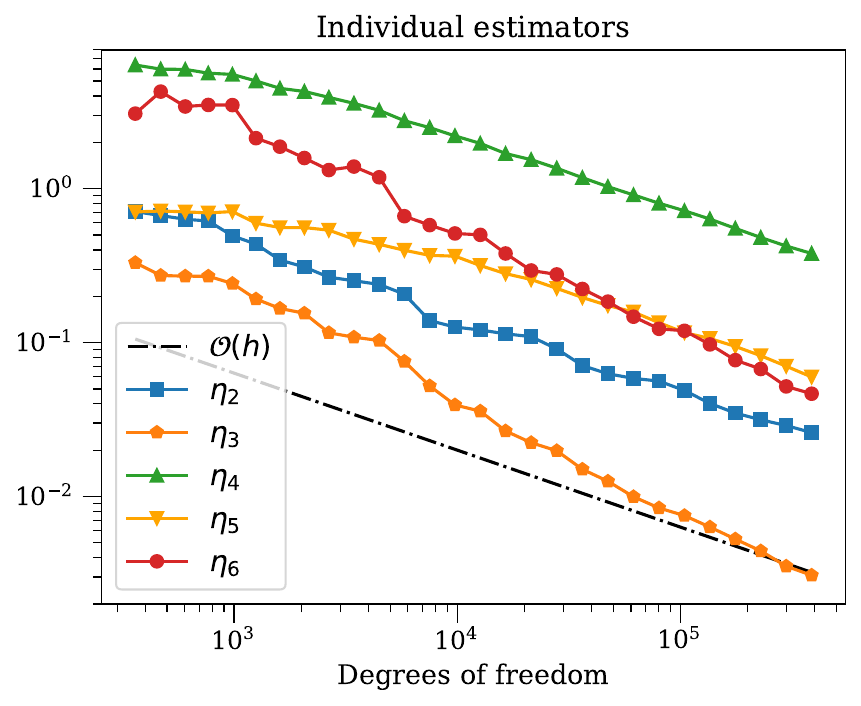}
   \end{center}
\caption{
Example 3: total estimator \eqref{eq:eta_tot_redefine} for the cases of adaptive and uniform refinement (left) and individual 
estimators, as defined by \eqref{eq:eta_defns} (right). 
}
\label{fig:example3_estimators}
\end{figure}

\section{Conclusions}
\label{sec:conclusions}
A fitted interior penalty discontinuous Galerkin method has been 
presented for a fourth order elliptic interface problem 
that arises from a linearized model of thin sheet folding.
A local efficiency bound for an estimator that measures
the extent to which the interface conditions along the fold 
has been proven. This required constructing a novel edge bubble function
that may be useful in the analysis of other interface problems. 
An improved \emph{a priori} error estimate under minimal 
solution regularity has also been shown via a \emph{medius} analysis. 
Numerical experiments illustrated the satisfactory performance 
of the \emph{a posteriori} bounds in practice.

\section*{Acknowledgments}
The authors thank Prof.\ S\"{o}ren Bartels for stimulating their interest 
in folding models during his visit to George Mason University in March 2024, as 
well as for suggesting the ``flapping'' numerical example in Section \ref{sec:numerical_examples}.
The authors also thank Keegan Kirk for insightful discussions. Finally, the authors gratefully acknowledge 
the community support from the \texttt{deal.II} User Group.


\bibliographystyle{amsplain}
\bibliography{references,references_RK}

\providecommand{\bysame}{\leavevmode\hbox to3em{\hrulefill}\thinspace}
\providecommand{\MR}{\relax\ifhmode\unskip\space\fi MR }
\providecommand{\MRhref}[2]{%
  \href{http://www.ams.org/mathscinet-getitem?mr=#1}{#2}
}
\providecommand{\href}[2]{#2}
\begin{thebibliography}{10}

\bibitem{starshade}
\emph{{Exoplanet Program: Starshade Technology Development}},
  \url{https://exoplanets.nasa.gov/exep/technology/starshade/}, Accessed:
  2024-12-09.

\bibitem{james_webb}
\emph{{James Webb Space Telescope}},
  \url{https://science.nasa.gov/mission/webb/}, Accessed: 2024-12-09.

\bibitem{annavarapu2012robust}
Chandrasekhar Annavarapu, Martin Hautefeuille, and John~E Dolbow, \emph{A
  robust nitsche’s formulation for interface problems}, Computer Methods in
  Applied Mechanics and Engineering \textbf{225} (2012), 44--54.

\bibitem{arndt2022deal}
Daniel Arndt et~al., \emph{The deal. ii library, version 9.4}, Journal of
  Numerical Mathematics \textbf{30} (2022), no.~3, 231--246.

\bibitem{attouch2006variational}
Hedy Attouch, Giuseppe Buttazzo, and G{\'e}rard Michaille, \emph{Variational
  analysis in sobolev and bv spaces, volume 6 of mps/siam series on
  optimization}, Society for Industrial and Applied Mathematics (SIAM),
  Philadelphia, PA (2006).

\bibitem{bartels2013approximation}
S\"{o}ren Bartels, \emph{Approximation of large bending isometries with
  discrete kirchhoff triangles}, SIAM Journal on Numerical Analysis \textbf{51}
  (2013), no.~1, 516--525.

\bibitem{MR3309171}
S\"oren Bartels, \emph{Numerical methods for nonlinear partial differential
  equations}, Springer Series in Computational Mathematics, vol.~47, Springer,
  Cham, 2015. \MR{3309171}

\bibitem{bartels2022modeling}
S{\"o}ren Bartels, Andrea Bonito, and Peter Hornung, \emph{Modeling and
  simulation of thin sheet folding}, Interfaces and Free Boundaries \textbf{24}
  (2022), no.~4, 459--485.

\bibitem{bartels2025babu}
S{\"o}ren Bartels, Andrea Bonito, Peter Hornung, and Michael Neunteufel,
  \emph{Babu{\v{s}}ka's paradox in a nonlinear bending model}, arXiv preprint
  arXiv:2503.17190 (2025).

\bibitem{bartels2017bilayer}
S{\"o}ren Bartels, Andrea Bonito, and Ricardo~H Nochetto, \emph{Bilayer plates:
  Model reduction, $\gamma$-convergent finite element approximation, and
  discrete gradient flow}, Communications on Pure and Applied Mathematics
  \textbf{70} (2017), no.~3, 547--589.

\bibitem{MR4699572}
S\"oren Bartels, Andrea Bonito, and Philipp Tscherner, \emph{Error estimates
  for a linear folding model}, IMA J. Numer. Anal. \textbf{44} (2024), no.~1,
  1--23. \MR{4699572}

\bibitem{bartels2024necessary}
S{\"o}ren Bartels and Philipp Tscherner, \emph{Necessary and sufficient
  conditions for avoiding {B}abu{\v{s}}ka’s paradox on simplicial meshes},
  IMA Journal of Numerical Analysis (2024), drae050.

\bibitem{MR4793681}
Andrea Bonito, Claudio Canuto, Ricardo~H. Nochetto, and Andreas Veeser,
  \emph{Adaptive finite element methods}, Acta Numer. \textbf{33} (2024),
  163--485. \MR{4793681}

\bibitem{bonito2024finite}
Andrea Bonito, Diane Guignard, and Angelique Morvant, \emph{Finite element
  methods for the stretching and bending of thin structures with folding},
  Numerische Mathematik (2024), 1--38.

\bibitem{bonito2023numerical}
Andrea Bonito, Diane Guignard, Ricardo~H Nochetto, and Shuo Yang,
  \emph{Numerical analysis of the {LDG} method for large deformations of
  prestrained plates}, IMA Journal of Numerical Analysis \textbf{43} (2023),
  no.~2, 627--662.

\bibitem{bonito2021dg}
Andrea Bonito, Ricardo~H Nochetto, and Dimitrios Ntogkas, \emph{{DG} approach
  to large bending plate deformations with isometry constraint}, Mathematical
  Models and Methods in Applied Sciences \textbf{31} (2021), no.~01, 133--175.

\bibitem{bonito2024gamma}
Andrea Bonito, Ricardo~H Nochetto, and Shuo Yang, \emph{Gamma-convergent {LDG}
  method for large bending deformations of bilayer plates}, IMA Journal of
  Numerical Analysis (2024), drad100.

\bibitem{MR3022211}
Susanne~C. Brenner, Shiyuan Gu, Thirupathi Gudi, and Li-yeng Sung, \emph{A
  quadratic {$C^{\circ}$} interior penalty method for linear fourth order
  boundary value problems with boundary conditions of the {C}ahn-{H}illiard
  type}, SIAM J. Numer. Anal. \textbf{50} (2012), no.~4, 2088--2110.
  \MR{3022211}

\bibitem{brenner2010posteriori}
Susanne~C Brenner, Thirupathi Gudi, and Li-yeng Sung, \emph{An a posteriori
  error estimator for a quadratic ${C}^0$-interior penalty method for the
  biharmonic problem}, IMA Journal of Numerical Analysis \textbf{30} (2010),
  no.~3, 777--798.

\bibitem{MR2373954}
Susanne~C. Brenner and L.~Ridgway Scott, \emph{The mathematical theory of
  finite element methods}, third ed., Texts in Applied Mathematics, vol.~15,
  Springer, New York, 2008. \MR{2373954}

\bibitem{MR1765651}
F.~Brezzi, G.~Manzini, D.~Marini, P.~Pietra, and A.~Russo, \emph{Discontinuous
  {G}alerkin approximations for elliptic problems}, Numer. Methods Partial
  Differential Equations \textbf{16} (2000), no.~4, 365--378. \MR{1765651}

\bibitem{caboussat2022anisotropic}
Alexandre Caboussat, Dimitrios Gourzoulidis, and Marco Picasso, \emph{An
  anisotropic adaptive method for the numerical approximation of orthogonal
  maps}, Journal of Computational and Applied Mathematics \textbf{407} (2022),
  113997.

\bibitem{cai2011discontinuous}
Zhiqiang Cai, Xiu Ye, and Shun Zhang, \emph{Discontinuous {G}alerkin finite
  element methods for interface problems: a priori and a posteriori error
  estimations}, SIAM journal on numerical analysis \textbf{49} (2011), no.~5,
  1761--1787.

\bibitem{cangiani2018adaptive}
Andrea Cangiani, Emmanuil Georgoulis, and Younis Sabawi, \emph{Adaptive
  discontinuous {G}alerkin methods for elliptic interface problems},
  Mathematics of Computation \textbf{87} (2018), no.~314, 2675--2707.

\bibitem{cangiani2013discontinuous}
Andrea Cangiani, Emmanuil~H Georgoulis, and Max Jensen, \emph{Discontinuous
  {G}alerkin methods for mass transfer through semipermeable membranes}, SIAM
  Journal on numerical analysis \textbf{51} (2013), no.~5, 2911--2934.

\bibitem{Childs_High_Performance_Visualization--Enabling_2012}
Hank Childs et~al., \emph{{High Performance Visualization--Enabling
  Extreme-Scale Scientific Insight}}, 2012.

\bibitem{choi2021compact}
Gary~PT Choi, Levi~H Dudte, and L~Mahadevan, \emph{Compact reconfigurable
  kirigami}, Physical Review Research \textbf{3} (2021), no.~4, 043030.

\bibitem{MR0520174}
Philippe~G. Ciarlet, \emph{The finite element method for elliptic problems},
  Studies in Mathematics and its Applications, vol. Vol. 4, North-Holland
  Publishing Co., Amsterdam-New York-Oxford, 1978. \MR{520174}

\bibitem{demaine2007geometric}
Erik~D Demaine and Joseph O'Rourke, \emph{Geometric folding algorithms:
  linkages, origami, polyhedra}, Cambridge university press, 2007.

\bibitem{di2011mathematical}
Daniele~Antonio Di~Pietro and Alexandre Ern, \emph{Mathematical aspects of
  discontinuous galerkin methods}, vol.~69, Springer Science \& Business Media,
  2011.

\bibitem{dominicus2024convergence}
Alexander Dominicus, Fernando~D Gaspoz, and Christian Kreuzer,
  \emph{Convergence of an adaptive ${C}^0$-interior penalty {G}alerkin method
  for the biharmonic problem}, IMA Journal of Numerical Analysis (2024),
  drae069.

\bibitem{MR543934}
Jim Douglas, Jr., Todd Dupont, Peter Percell, and Ridgway Scott, \emph{A family
  of {$C\sp{1}$}\ finite elements with optimal approximation properties for
  various {G}alerkin methods for 2nd and 4th order problems}, RAIRO Anal.
  Num\'er. \textbf{13} (1979), no.~3, 227--255. \MR{543934}

\bibitem{fraunholz2015convergence}
Thomas Fraunholz, Ronald~HW Hoppe, and Malte Peter, \emph{Convergence analysis
  of an adaptive interior penalty discontinuous {G}alerkin method for the
  biharmonic problem}, Journal of Numerical Mathematics \textbf{23} (2015),
  no.~4, 317--330.

\bibitem{friesecke2002theorem}
Gero Friesecke, Richard~D James, and Stefan M{\"u}ller, \emph{A theorem on
  geometric rigidity and the derivation of nonlinear plate theory from
  three-dimensional elasticity}, Communications on Pure and Applied
  Mathematics: A Journal Issued by the Courant Institute of Mathematical
  Sciences \textbf{55} (2002), no.~11, 1461--1506.

\bibitem{MR2520159}
Emmanuil~H. Georgoulis and Paul Houston, \emph{Discontinuous {G}alerkin methods
  for the biharmonic problem}, IMA J. Numer. Anal. \textbf{29} (2009), no.~3,
  573--594. \MR{2520159}

\bibitem{MR2755946}
Emmanuil~H. Georgoulis, Paul Houston, and Juha Virtanen, \emph{An {\it
  a~posteriori} error indicator for discontinuous {G}alerkin approximations of
  fourth-order elliptic problems}, IMA J. Numer. Anal. \textbf{31} (2011),
  no.~1, 281--298. \MR{2755946}

\bibitem{MR2684360}
Thirupathi Gudi, \emph{A new error analysis for discontinuous finite element
  methods for linear elliptic problems}, Math. Comp. \textbf{79} (2010),
  no.~272, 2169--2189. \MR{2684360}

\bibitem{MR2034620}
Ohannes~A. Karakashian and Frederic Pascal, \emph{A posteriori error estimates
  for a discontinuous {G}alerkin approximation of second-order elliptic
  problems}, SIAM J. Numer. Anal. \textbf{41} (2003), no.~6, 2374--2399.
  \MR{2034620}

\bibitem{liu2021origami}
Huan Liu, Paul Plucinsky, Fan Feng, and Richard~D James, \emph{Origami and
  materials science}, Philosophical Transactions of the Royal Society A
  \textbf{379} (2021), no.~2201, 20200113.

\bibitem{paulsen2019wrapping}
Joseph~D Paulsen, \emph{Wrapping liquids, solids, and gases in thin sheets},
  Annual Review of Condensed Matter Physics \textbf{10} (2019), no.~1,
  431--450.

\bibitem{MR1910752}
Ilaria Perugia and Dominik Sch\"otzau, \emph{An {$hp$}-analysis of the local
  discontinuous {G}alerkin method for diffusion problems}, Proceedings of the
  {F}ifth {I}nternational {C}onference on {S}pectral and {H}igh {O}rder
  {M}ethods ({ICOSAHOM}-01) ({U}ppsala), vol.~17, 2002, pp.~561--571.
  \MR{1910752}

\bibitem{MR1300112}
J.~Petera and J.~F.~T. Pittman, \emph{Isoparametric {H}ermite elements},
  Internat. J. Numer. Methods Engrg. \textbf{37} (1994), no.~20, 3489--3519.
  \MR{1300112}

\bibitem{saito2017investigation}
Kazuya Saito, Shuhei Nomura, Shuhei Yamamoto, Ryuma Niiyama, and Yoji Okabe,
  \emph{Investigation of hindwing folding in ladybird beetles by artificial
  elytron transplantation and microcomputed tomography}, Proceedings of the
  National Academy of Sciences \textbf{114} (2017), no.~22, 5624--5628.

\bibitem{saye2019efficient}
Robert~I Saye, \emph{{Efficient multigrid solution of elliptic interface
  problems using viscosity-upwinded local discontinuous Galerkin methods}},
  Communications in Applied Mathematics and Computational Science \textbf{14}
  (2019), no.~2, 247--283.

\bibitem{schleicher2015methodology}
Simon Schleicher, Julian Lienhard, Simon Poppinga, Thomas Speck, and Jan
  Knippers, \emph{A methodology for transferring principles of plant movements
  to elastic systems in architecture}, Computer-Aided Design \textbf{60}
  (2015), 105--117.

\bibitem{strang1972variational}
Gilbert Strang, \emph{Variational crimes in the finite element method}, The
  mathematical foundations of the finite element method with applications to
  partial differential equations, Elsevier, 1972, pp.~689--710.

\bibitem{tschernernumerical}
Philipp Tscherner, \emph{Numerical modeling and simulation of curved folding in
  thin elastic sheets}, Ph.D. thesis, Dissertation, Universit{\"a}t Freiburg,
  2024.

\bibitem{zhang_2021_5812174}
Jiaqi Zhang and Timo Heister, \emph{The deal.ii tutorial step-74: Symmetric
  interior penalty galerkin method for poisson's equation},
  \url{https://doi.org/10.5281/zenodo.5812174}, January 2021.

\end{thebibliography}

\end{document}